\documentclass[11pt]{article}
\usepackage{amsfonts,amsmath,amssymb,amsthm,graphicx}
\usepackage{multirow}
\usepackage{booktabs}
\usepackage[caption=false]{subfig} 
\usepackage{color}
\usepackage{ifthen}
\usepackage{tabu}
\usepackage{tabularx}
\usepackage[hidelinks]{hyperref}

\setlength{\topmargin}{0 mm}
\setlength{\oddsidemargin}{5 mm}
\setlength{\evensidemargin}{5 mm}
\setlength{\textwidth}{150 mm}
\setlength{\textheight}{210 mm}

\newtheorem{lemma}{Lemma}

\newtheorem{assumption}{Assumption}

\newcommand{\xib}{{\boldsymbol \xi}}
\newcommand{\ab}{{\mathbf a}}

\newcommand{\eb}{{\mathbf e}}
\newcommand{\fb}{{\mathbf f}}

\newcommand{\pb}{{\mathbf p}}

\newcommand{\rb}{{\mathbf r}}
\newcommand{\yb}{{\mathbf y}}

\newcommand{\qb}{{\mathbf q}}
\newcommand{\ub}{{\mathbf u}}
\newcommand{\vb}{{\mathbf v}}
\newcommand{\wb}{{\mathbf w}}

\newcommand{\oneb}{{\mathbf 1}}

\newcommand{\p}{\partial}

\newcommand{\ba}{\begin{array}}
\newcommand{\ea}{\end{array}}
\newcommand{\be}{\begin{equation}}
\newcommand{\ee}{\end{equation}}
\newcommand{\bd}{\begin{displaymath}}
\newcommand{\ed}{\end{displaymath}}

\newcommand{\xb}{\mathbf{x}}



\newcommand{\Sb}{\mathbf S}


\newcommand{\usecomments}{true}
\newcommand{\ocomment}[1] {
\ifthenelse{ \equal{\usecomments}{true} }{
 \textbf{Ossian: }{\color{blue} #1}
}{
}
}

\definecolor{darkgreen}{rgb}{0.0, 0.588, 0.178}
\newcommand{\revisedblue}[1]{{#1}}
\newcommand{\revisedred}[1]{{#1}}
\newcommand{\revisedgreen}[1]{{#1}}

\begin{document}

\title{Energy conservative SBP discretizations of the acoustic wave equation in covariant
form on staggered curvilinear grids}

\author{Ossian O'Reilly\thanks{Southern California Earthquake Center,
University of Southern California, Los Angles, CA 90089.}
  \and N. Anders Petersson\thanks{Center for Applied
    Scientific Computing, Lawrence Livermore National Laboratory, L-561, PO Box 808, Livermore CA
    94551.}
}

\date{\today}

\maketitle
\begin{abstract}
We develop a numerical method for solving the acoustic wave equation in covariant form on staggered
curvilinear grids in an energy conserving manner. The use of a covariant basis decomposition leads
to a rotationally invariant scheme that outperforms a Cartesian basis decomposition on rotated
grids. The discretization is based on high order Summation-By-Parts (SBP) operators and preserves
both symmetry and positive definiteness of the contravariant metric tensor. To improve accuracy and
decrease computational cost, we also derive a modified discretization of the metric tensor that
leads to a conditionally stable discretization. Bounds are derived that yield a point-wise condition
that can be evaluated to check for stability of the modified discretization. This condition shows that
the interpolation operators should be constructed such that their norm is close
        to \revisedred{one}.
\end{abstract}

\section{Introduction}
Many applications featuring wave propagation require numerical methods capable of treating complex
geometry. A popular approach for finite difference methods is to solve the governing equations on
curvilinear grids. 
Typically, the governing equations are first formulated in terms of Cartesian
vector components followed by transforming the spatial derivatives with respect
to the curvilinear coordinates. An alternative approach utilizes the invariance of the governing equation with
respect to the choice of coordinate basis. In this paper we develop a staggered grid discretization
of the acoustic wave equation where the components of the velocity vector are expressed with respect
to the covariant basis, which varies throughout space.

Discretization of wave equations on general staggered curvilinear grids is challenging due to the
emergence of off-diagonal metric terms. On a Cartesian grid, all fields and their components can be
positioned in the grid so that all of the terms can be discretized by staggered difference
approximations. However, a curvilinear coordinate transformation introduces additional terms into
the governing equations that cannot be discretized without interpolation. One can directly
discretize these additional terms by collocating some of the components, but this collocation
increases the computational cost and memory storage. Despite this increase in computational cost,
the approach has been used in computational seismology for solving the elastic wave equation
\cite{de2014mimetic, tarrass2011new, PerezSolano2016}. An alternative solution is to solve the
covariant form of the governing equations on orthogonal grids. In this case, the metric tensor
becomes diagonal. This approach has been used in computational electrodynamics for solving Maxwell's
equations in covariant form \cite{xie2002explicit, taflove2005computational}.  Unfortunately, the
generation of orthogonal grids is a non-trivial task, in particular for many practical applications
that feature complex 3D geometries. A third solution is to use interpolation in the off-diagonal
components. So far this approach has only been developed with one-dimensional low order
interpolation~\cite{hestholm19942d, hestholm19983, doi:10.1046/j.1365-2478.2002.00327.x}, resulting
in a scheme that may be susceptible to long-term instabilities, requiring ad-hoc
stabilization~\cite{doi:10.1190/1.1543217}. In the present work, we develop a provable stable
numerical method that solves the acoustic wave equation in covariant form on a general staggered
curvilinear grids. 
The proposed scheme is high-order-accurate, energy conservative and preserves
positive definiteness of the discretized metric tensor.

To obtain a provably stable method, we discretize the acoustic wave equation by applying the
Summation-By-Parts (SBP) principle. The SBP principle provides a way to analyze and derive provably
stable schemes by constructing energy estimates for semi-discrete approximations. SBP methods were
originally designed to obtain provably stable high order finite difference approximations for
first order hyperbolic systems on collocated grids~\cite{Kreiss-Scherer-74, Strand-94, Olsson-95a,
  Olsson-95b}. Since that time, there have been numerous developments that, for example, extend the
SBP methodology to non-conforming
grids~\cite{mattsson2010stable,petersson-sjogreen-meshref,gao2019sbp} and to equations with second
and higher order
derivatives~\cite{Sjogreen-Petersson-12,Petersson-Sjogreen-aniso-15,mattsson2014diagonal}.  SBP
methods have also been extend beyond finite differences, e.g., to finite volume, discontinuous
Galerkin, and flux reconstruction schemes~\cite{nordstrom2003finite,
  gassner2013skew,ranocha2016summation}. Methods have also been devised for coupling different
SBP discretizations~\cite{nordstrom2006stable,lundquist2018hybrid,kozdon2016stable,gao2019combining}.

Our proposed curvilinear staggered SBP discretization builds upon our previous work on staggered
grid methods for wave equations in Cartesian
geometries~\cite{o2017energy,prochnow2017treatment,mattsson2017compatible}. Our main theoretical
result is that stability can be guaranteed on a curvilinear grid under two conditions. First, the
interpolation operators must be compatible with the SBP norm weights such that the scalar product
between a staggered and a node-centered grid function gives the same result,
regardless of which
function is being interpolated. Secondly, the discretization must preserve the positive definiteness
of the metric tensor. The latter condition is essential because the velocity field is decomposed
with respect to the covariant basis. As a result, the symmetric and positive definite covariant
metric tensor appears in the kinetic energy term. The discretization of this tensor involves
interpolation, which can destroy positive definiteness. We show how to preserve positive
definiteness of this tensor in the discretized equations. While it is possible to preserve positive
definiteness for any non singular curvilinear grid, such a discretization has to involve interpolation in the
diagonal components of the contravariant metric tensor, which reduces accuracy and increases
computational cost. By modifying the metric tensor discretization in its diagonal components, it is
possible to overcome this reduction in accuracy and increase in computational cost. However,
positive definiteness can no longer be guaranteed for all \revisedgreen{non singular} grids.  We derive a condition that can be
checked in a point-wise manner to ensure positive definiteness of the metric tensor
discretization.  This condition shows that the interpolation operators should be constructed such
that their norm is close to \revisedred{one}.

The remainder of the paper is organized as follows. The covariant form of the acoustic wave equation
and curvilinear mappings are reviewed in Section \ref{s:problem}.  In Section \ref{s:operators} we
develop SBP finite difference for staggered grids, first in one dimension and then in two dimensions
for the acoustic wave equation. In Section \ref{s:energy}, we show that the discretization is energy
conservative.  Two alternative ways of discretizing the metric tensor such that the positive
definiteness is preserved are presented in Section \ref{s:contravariant}.  In Section
\ref{s:experiments}, we conduct numerical experiments. We first investigate the accuracy of the
scheme for the two different metric tensor discretizations. We then demonstrate that the covariant
formulation can outperform a Cartesian formulation due to the loss of
rotation\revisedgreen{al} invariance in the
latter. The section is concluded by demonstrating that the solution is free from numerical artifacts
when the acoutic wave propagation is driven by a discretized point force applied to the boundary. Finally, in
Section \ref{s:conclusions}, the study is summarized and conclusions are drawn.

\section{Problem formulation}\label{s:problem}
Consider the 2-D acoustic wave equation in dimensionless form (all quantities are scaled, including
space and time)
\begin{align}
        \frac{\p p}{\p t} + \nabla \cdot \boldsymbol{v} = 0, 
        \label{vec_pressure}
        \\
        \frac{\p \boldsymbol{v}}{\p t} + \nabla p = 0,
        \label{vec_velocity}
\end{align}
posed on a 2-D bounded domain $(x,y) \in \Omega$ for $t\geq 0$. Here, $p$ is the pressure and
$\boldsymbol{v}$ is the velocity vector. We consider cases where the domain $\Omega$ can be defined
through a curvilinear mapping from the unit square in parameter space \cite{ThoWarMas85}.  We define
the curvilinear coordinates
\[
0 \leq r^1 \leq 1 
\quad
\mbox{and}
\quad
0 \leq r^2 \leq 1,
\]
and the continuously differentiable mapping $x = X(r^1,r^2)$ and $y = Y(r^1,r^2)$ between the
curvilinear and Cartesian coordinates. \revisedgreen{We assume that the mapping
is non singular}. By differentiating the mapping with respect to the
curvilinear coordinates, one obtains the \emph{covariant} basis vectors,
\begin{align*}
        \ab_1 = 
        \begin{bmatrix}
                \frac{\partial X}{\partial r^1} \\
                \frac{\partial Y}{\partial r^1}
        \end{bmatrix}
        \quad
        \mbox{and}
        \quad
        \ab_2 = 
        \begin{bmatrix}
        \frac{\partial X}{\partial r^2} \\
        \frac{\partial Y}{\partial  r^2}
        \end{bmatrix}.
\end{align*}
The corresponding covariant metric tensor is defined by $g_{ij} = \ab_i \cdot
\ab_j$. It is symmetric and positive definite.

The \emph{contravariant} basis vectors $\ab^1$ and $\ab^2$ can be defined by the
orthogonality condition
\[
\ab^i \cdot \ab_j = \delta^i_j 
= 
\begin{cases}
        1, & i = j \\
        0, & i \ne j
\end{cases}.
\label{contravariant_basis}
\]
The contravariant metric tensor satisfies $g^{ij} = \ab^i\cdot \ab^j$. It is symmetric and positive
definite, and the Jacobian of the curvilinear mapping, $J=1/\sqrt{|g^{ij}|}$, is assumed to be
bounded. Here, $|g^{ij}|$ denotes the determinant of the tensor $g^{ij}$.

The velocity field is decomposed with respect to the covariant basis,
\[
        \boldsymbol{v} = v^1 \ab_1 + v^2 \ab_2,
\]
where $v^1$ and $v^2$ are the contravariant velocity components. The
contravariant metric tensor can be used to transform between covariant and
contravariant velocity components,
\begin{align}
        \begin{bmatrix}
        v^1 \\
        v^2
        \end{bmatrix}
        = 
        \begin{bmatrix}
                g^{11} & g^{12} \\
                g^{12} & g^{22}
        \end{bmatrix}
        \begin{bmatrix}
                v_1 \\
                v_2
        \end{bmatrix}.
        \label{eq:contravariant}
\end{align}
The inverse relationship is
\begin{align}
        \begin{bmatrix}
        v_1 \\
        v_2
        \end{bmatrix}
        = 
        \begin{bmatrix}
                g_{11} & g_{12} \\
                g_{12} & g_{22}
        \end{bmatrix}
        \begin{bmatrix}
                v^1 \\
                v^2
        \end{bmatrix}. 
        \label{eq:covariant}
\end{align}
Contravariant quantities are always denoted by superscripts, whereas covariant quantities are
always denoted by subscripts.

Next, we derive the covariant formulation of the acoustic wave equation. The divergence of the
velocity vector field satisfies \cite{ThoWarMas85, grinfeld2013introduction}
\begin{align}
        \nabla \cdot \boldsymbol{v} = \frac{1}{J}\frac{\p Jv^1}{\p 
        r^1} + \frac{1}{J}\frac{\p Jv^2}{\p r^2}.
\label{div}
\end{align}
The pressure gradient satisfies
\begin{align}
        \nabla p = \ab^1 \frac{\p p}{\p r^1} + \ab^2\frac{\p p}{\p r^2}. 
\label{grad}
\end{align}
By inserting (\ref{div}) into (\ref{vec_pressure}) and (\ref{grad}) into
(\ref{vec_velocity}), followed by taking the dot product of (\ref{vec_velocity})
with the contravariant basis, results in
\begin{align}
        J\frac{\p p}{\p t} + \frac{\p}{\p r^1}\left(Jv^1 \right) + \frac{\p}{\p
        r^2 }\left(Jv^2\right) &= 0,\label{eq:pressure}\\
\frac{\p }{\p t}
        \begin{bmatrix}
        v^1 \\
        v^2
        \end{bmatrix}
        +
        \begin{bmatrix}
                g^{11} & g^{12} \\
                g^{12} & g^{22}
        \end{bmatrix}
        \begin{bmatrix}
        \frac{\p p}{\p r^1} \\
        \frac{\p p}{\p r^2}
        \end{bmatrix}
                 &= \begin{bmatrix}
                         0 \\
                         0
                        \end{bmatrix}
                        .\label{eq:velocity}
\end{align}

The total energy in the system is the sum of the acoustic and the kinetic
energies 
\begin{align}
        e(t) = 
        \frac{1}{2}\int p^2 J dr^1dr^2 
        + \frac{1}{2}\int ( v_1v^1 + v_2v^2)
        J dr^1 dr^2.
\end{align}
The second integral expresses the kinetic energy as an invariant formed by contracting the covariant
velocity components with the contravariant components. Alternately, the covariant velocity
components can be transformed into the contravariant velocity components using
(\ref{eq:covariant}), yielding
\begin{align}
        e(t) = 
        \frac{1}{2}\int p^2 J dr^1dr^2 
        + \frac{1}{2}
        \sum_{i,j}
        \int 
        g_{ij}v^i  v^j J dr^1 dr^2.
        \label{continuous_energy}
\end{align}
In this form, we see that (\ref{continuous_energy}) is positive since the
covariant metric tensor $g_{ij}$ is symmetric and positive definite.  By
differentiating the energy (\ref{continuous_energy}) with respect to time and
inserting the governing equations
(\ref{eq:pressure})-(\ref{eq:velocity}), we obtain the energy rate
\begin{align*}
        \frac{de}{dt} &= 
        -\int p \left(
        \frac{\partial (Jv^1)}{\partial r^1} 
        +
        \frac{\partial (Jv^2)}{\partial r^2} 
        \right) dr^1 dr^2
        -
        \int 
        \left(
        \left( J v^1 \frac{\partial p}{\partial r^1}\right)
        +
        \left( J v^2 \frac{\partial p}{\partial r^2}\right)
        \right) dr^1 dr^2 \\
        &= 
        -\int \left(
        \frac{\partial (pJv^1)}{\partial r^1} 
        +
        \frac{\partial (pJv^2)}{\partial r^2} 
        \right) dr^1 dr^2 \\
        &= 
        -\int \left[ p J v^1 \right]_{r^1=0}^1 dr^2
        -\int \left[ p J v^2 \right]_{r^2=0}^1 dr^1.
\end{align*}
The terms in brackets on the right hand side must be bounded by imposing one boundary condition per
side of the unit square in parameter space. With the homogeneous pressure boundary condition $p = 0$
on all sides,
\[
        \frac{de}{dt} = 0.
\]
As expected, the acoustic wave equation in covariant formulation conserves the total energy in the system.

\section{SBP operators}\label{s:operators}
Before we can present the numerical scheme, we need to introduce some
definitions. First, we review SBP staggered grid finite difference
methods in one spatial dimension, and then in two spatial dimensions
\cite{o2017energy}. \revisedblue{The SBP operators form building blocks for
constructing our provably stable staggered grid scheme. To make it easier to understand how the
construction works, we share some of our codes in the repository
\url{github.com/ooreilly/sbp}. In particular, this repository contains the SBP
operators derived in this work and a prototype implementation of the
scheme.}

We introduce the grid vectors $\xb$ (node-centered) and $\hat{\xb}$
(cell-centered) that discretize the unit
interval using $N+1$ and $N+2$ grid points, respectively,
\[
\mathbf{x} = [x_0 \  x_1 \ \ldots \ x_N]^T
\quad
\mbox{and}
\quad
\hat{\mathbf{x}} =
[\hat{x}_0 \ \hat{x}_1 \ \ldots \ \hat{x}_{N+1}]^T.
\]
The grid points are
\begin{align}
        x_0 &= 0, 
        &
        x_i &= ih, 
        &
        \quad
        i&=1,2,\ldots,N-1, 
        \quad
        &
        x_N &= 1,
        \\
        \hat{x}_0 &= 0,
        &
        \hat{x}_i &= (i - 1/2)h, 
        &
        \quad
        i&=1,2,\ldots,N,
        \quad
        &
        \hat{x}_{N+1} &= 1,\label{eq_1d-grids} 
\end{align}
and $h = 1/N$ is the grid spacing, \revisedgreen{see Figure \ref{fig:grid1d}}.
On the $\xb$-grid, the \revisedgreen{function}
$u(x)$ is approximated by \revisedgreen{grid function} $\mathbf{u} = [u_0 \ u_1 \ \ldots \ u_N]^T$, where
$u_i \approx u(x_i)$. Similarly, on the $\hat{\xb}$-grid, the
\revisedgreen{function} ${v}(x)$ is
approximated by \revisedgreen{the grid function} $\hat{\mathbf{v}} = [\hat{v}_0 \ \hat{v}_1 \ \ldots
\ \hat{v}_{N+1}]^T$, where $\hat{v}_i \approx v(\hat{x}_i)$. 

We introduce the SBP staggered grid difference operators $D$ and $\hat{D}$ that approximate the first
derivative, see Figure \ref{fig:grid1d}. These operators are accurate to order
\revisedred{$2s$} in the interior and to order
\revisedred{$s$} for a few points near each boundary. The operator $D$
\revisedgreen{approximates the derivative of a grid function on the
$\hat{\mathbf{x}}$-grid at the $\mathbf{x}$-grid}.
In contrast, the operator $\hat{D}$ \revisedgreen{approximates the 
derivative of a grid function on the $\mathbf{x}$-grid at the $\hat{\mathbf{x}}$-grid}.
For example, for all difference operators that are at
least second order accurate on the boundary (\revisedred{$s \geq 2$}),
quadratic monomials \revisedgreen{$\mathbf{\xi}$} and \revisedgreen{$\hat{\xib}$} are differentiated exactly,
\[
        \hat{D}\revisedgreen{\xib} = 2\revisedgreen{\hat{\xib}}, \quad
        {D}\revisedgreen{\hat{\xib}} = 2\revisedgreen{\xib},
        \quad
\revisedgreen{\xib} = [x_0^2 \ x_1^2 \ \ldots \ x_N^2]^T, \quad
\revisedgreen{\hat{\xib}} = [\hat{x}_0^2 \ \hat{x}_1^2 \ \ldots \ \hat{x}_{N+1}^2]^T.
\]

\begin{figure}[!htbp]
        \includegraphics[width=\textwidth]{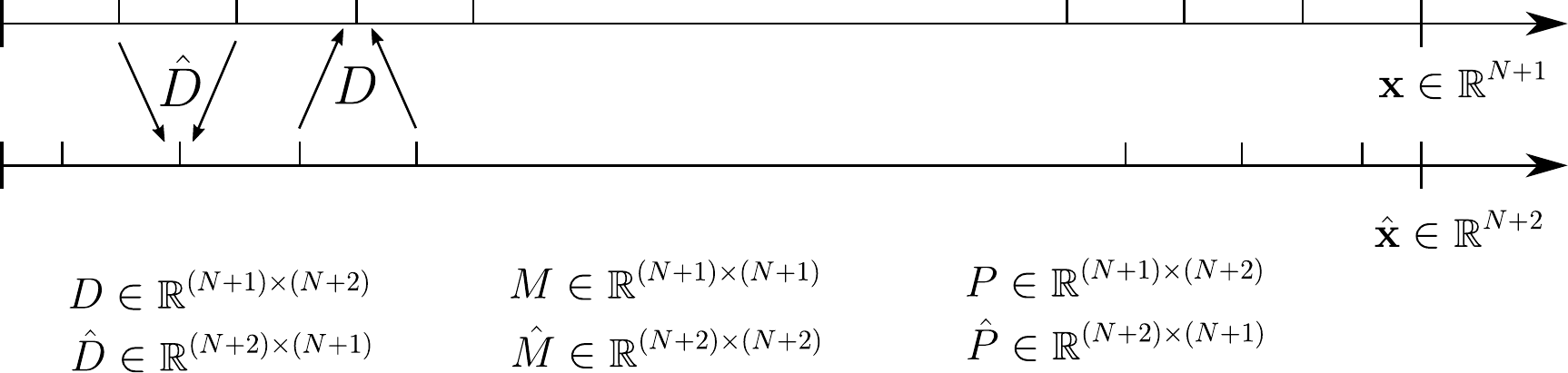}
        \caption{Action of staggered grid operators on the two grids $\xb$ and
        $\hat{\xb}$ and their respective sizes.}
        \label{fig:grid1d}
\end{figure}

Staggered SBP difference operators satisfy a summation
by parts property, corresponding to integration by parts,
\begin{align}
        \int_{x=0}^1 u \frac{\partial v}{\partial x} dx &= u(1)v(1) -
        u(0)v(0) - 
        \int_{x=0}^1 \frac{\partial u}{\partial x} v dx, 
        \nonumber
        \\
        &\nonumber\\
        \mathbf{u}^TMD\hat{\mathbf{v}} &= u_Nv_{N+1} - u_0v_0 -
        (\hat{D}\mathbf{u})^T\hat{M}\hat{\mathbf{v}}.
        \label{SBP_property}
\end{align}
The property (\ref{SBP_property}) is key to proving energy stability of the semi-discrete
approximation. The matrices $M$ and
$\hat{M}$ are positive definite diagonal
matrices that define \revisedred{$2s-1$} order accurate quadrature rules on the 
grids $\bf x$ and $\hat{\mathbf{x}}$, respectively. Since the grid points of the
cell-centered and nodal grids overlap, the SBP property (\ref{SBP_property})
gives boundary terms involving unknowns on the boundary. Alternate approaches
that avoid overlapping boundary points either interpolates \revisedgreen{or} extrapolates the numerical solution to the boundary
\cite{gao2019sbp}, or strongly impose the boundary condition \cite{prochnow2017treatment}.

In addition to difference operators, we also need to define SBP staggered interpolation
operators. These operators are \revisedred{$2s$} order accurate in the interior
and \revisedred{$s$} order
accurate on the boundary. The interpolation operator $P$
interpolates a grid function on the $\hat{\mathbf{x}}$-grid to the
$\mathbf{x}$-grid, and the interpolation operator $\hat{P}$
interpolates in the opposite direction. For example, if the interpolation operator is at
least second order accurate on the boundary (\revisedred{$s \geq 2$}), then 
\[
        P\hat{\xb} = \xb, 
        \quad
        \hat{P}{\xb} = \hat{\xb}.
\]
The interpolation operators satisfy the SBP property
\begin{align}
        \ub^T M P \hat{\vb}  = 
        (\hat{P}\ub)^T \hat{M} \hat{\vb}
        \Rightarrow 
        M P = \hat{P}^T \hat{M}, 
        \label{SBP_interpolation}
\end{align}
for all grid functions $\ub$ and $\vb$.
The property (\ref{SBP_interpolation}) states that when evaluating the integral $\int uv dx$ using
the SBP quadrature, the result must be the same whether $\ub$ is interpolated to
the $\hat{\xb}$-grid, or $\hat{\vb}$ is interpolated to the ${\xb}$-grid.

\subsection{Two-dimensional SBP Operators}\label{sec_2dsbp} 
In two spatial dimensions, we introduce a staggered grid that discretizes the
pressure field $p$ and the contravariant velocity components ($v^1$, $v^2$). This
grid is parameterized by the curvilinear coordinates $r^1$ and $r^2$, see Figure
\ref{fig_stag}.

On the staggered grid, pressure $p$ is located at
cell-centers and the contravariant velocity components $v^1$ and $v^2$
are located on the edges with $r^1=\mbox{const.}$ \revisedgreen{or}
$r^2=\mbox{const.}$, respectively. 
\begin{figure}
        \centering
        \includegraphics[width=0.8\textwidth]{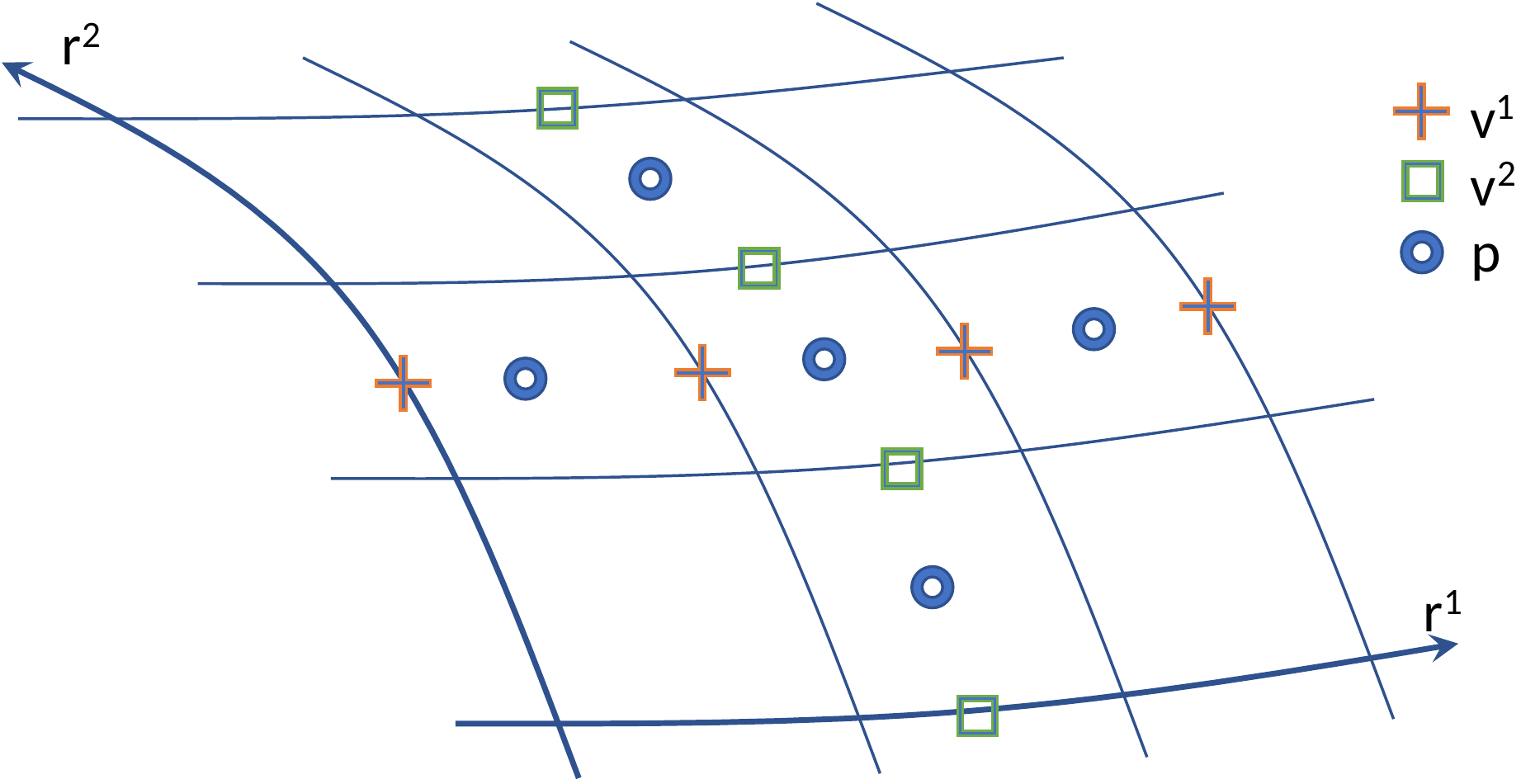}
        \caption{Curvilinear staggered grid used for discretizing the acoustic wave equation in two
          dimensions. The pressure $p$ is located at the cell centers (circles)
          and the contravariant
          components $v^i$ of the velocity field are located at the edge midpoints (pluses and
          squares).}
        \label{fig_stag}
\end{figure}
Without loss
of generality, we use the same number of cells $N$ (and grid spacing $h$) in
both
coordinate directions.
We collect the spatial discretizations of the pressure and velocity component fields in the vectors
$\mathbf{p}$, $\mathbf{v}^1$ and $\mathbf{v}^2$. 
These column vectors are ordered as follows:
\begin{align*}
        \mathbf{p}&=\left[{p}_{0,0}\ {p}_{0,1}\ \ldots\ {p}_{0,N+1}\
{p}_{1,0}\ {p}_{1,1}\ \ldots\ {p}_{1,N+1}\ \ldots
        \ \ldots\ {p}_{N+1,N+1}\right]^T \in \mathbb{R}^{\hat{N}},
\\
        \mathbf{v}^1&=\left[{v}^1_{0,0}\ {v}_{0,1}^1\ \ldots\ {v}^1_{0,N+1}\
{v}^1_{1,0}\ {v}^1_{1,1}\ \ldots\ {v}^1_{1,N+1}\ \ldots
        \ \ldots\ {v}^1_{N,N+1}\right]^T \in \mathbb{R}^{N_1},
\\
        \mathbf{v}^2 &=\left[{v}^2_{0,0}\ {v}_{0,1}^2\ \ldots\ {v}^2_{0,N}\
{v}^2_{1,0}\ {v}^2_{1,1}\ \ldots\ {v}^2_{1,N}\ \ldots
        \ \ldots\ {v}^2_{N+1,N}\right]^T \in \mathbb{R}^{N_2},
\end{align*}
where $p_{ij} \approx p(\hat{r}^1_i, \hat{r}^2_j, t)$, $v^1_{ij} \approx v^1(r^1_i,
\hat{r}^2_j, t)$, and $v^2_{ij} \approx v^2(\hat{r}^1_i, r^2_j, t)$.
Furthermore, the length
of each grid vector is, respectively,
\begin{align}
        \hat{N} = (N+2)(N+2), \quad
        N_1 = (N+1)(N+2),
        \quad
        N_2 = (N+2)(N+1),
        \nonumber
\end{align}
i.e., $N_1 = N_2$ in this particular case.

We extend the differentiation and interpolation operators to two spatial
dimensions by applying them in a line-by-line manner using Kronecker products. 
The difference operators become
\begin{align*}
        D_1 &= (D \otimes \hat{I})
        \in \mathbb{R}^{N_1 \times
        \hat{N} },
        \quad 
        D_2 = (\hat{I} \otimes D)  \in \mathbb{R}^{N_2 \times
        \hat{N}}, \\
        \widehat{D}_1 &= (\hat{D} \otimes \hat{I}) \in
        \mathbb{R}^{\hat{N} \times N_1},
        \quad
        \widehat{D}_2 = (\hat{I} \otimes \hat{D}) 
        \in
        \mathbb{R}^{\hat{N} \times N_2},
\end{align*}
where $\hat{I}$ is the $\hat{N} \times \hat{N}$ identity matrix.
In this notation, $D_1$ is a difference operator that acts on a cell-centered
quantity and approximates the first
derivative $\p/\p r^1$ on the edge-1 grid, see Figure \ref{fig_bnd}. We also introduce the
following norm weight matrices, 
\begin{align*}
        H_1 &= (M \otimes \hat{M}) \in \mathbb{R}^{N_1 \times N_1},
        \quad
        H_2 = (\hat{M} \otimes {M}) \in \mathbb{R}^{N_2 \times N_2},
        \quad
        \hat{H} = (\hat{M} \otimes \hat{M}) \in \mathbb{R}^{\hat{N} \times \hat{N}},
\end{align*}
and interpolation operators
\begin{align}
        \begin{aligned}
                P_{1c} = (P \otimes \hat{I}) \in \mathbb{R}^{N_1 \times
                \hat{N}},
                \quad
                P_{c1} = (\hat{P} \otimes \hat{I}) \in \mathbb{R}^{\hat{N}
                \times N_1},
                \\
                P_{2c} = (\hat{I} \otimes {P}) \in \mathbb{R}^{N_2 \times
                \hat{N}},
                \quad
                P_{c2} = (\hat{I} \otimes \hat{P}) \in \mathbb{R}^{\hat{N}
                \times N_2}. 
        \end{aligned}
        \label{interpolation_operators}
\end{align}
The interpolation operator $P_{1c}$ interpolates from the cell-centered grid to
the edge-1 grid, and so forth.
By applying the SBP interpolation property (\ref{SBP_interpolation}) to  (\ref{interpolation_operators}), we find
\begin{align}
 H_1P_{1c} = P_{c1}^T\hat{H}
 \quad
 \mbox{and}
 \quad
H_2P_{2c} = P_{c2}^T\hat{H}.
\label{SBP_interpolation_2d}
\end{align}

The extension of the SBP property (\ref{SBP_property}) to 2D becomes,
\begin{align}
\begin{aligned}
        (\vb^1)^TH_1 D \pb &= - (\hat{D}_1 \vb^1)^T \hat{H}\pb + (\vb^1_R)^T
        \hat{M}
        \pb_R - (\vb^1_L)^T \hat{M} \pb_L, \\
        (\vb^2)^TH_2 D \pb &= - (\hat{D}_2 \vb^2)^T \hat{H} \pb+ (\vb^2_T)^T
        \hat{M}
        \pb_T - (\vb^2_B)^T \hat{M} \pb_B, 
\end{aligned}
        \label{SBP_property_2d}
\end{align}
where $\pb_R,\ \pb_L,\ \pb_T, \ \pb_B  \in \mathbb{R}^{N+2}$, 
contain the grid values of $\pb$ along the right, left, top, and bottom
  boundaries, respectively (and similarly for $\vb^1$ and $\vb^2$).
\begin{figure}
        \centering
        \includegraphics[width=0.8\textwidth]{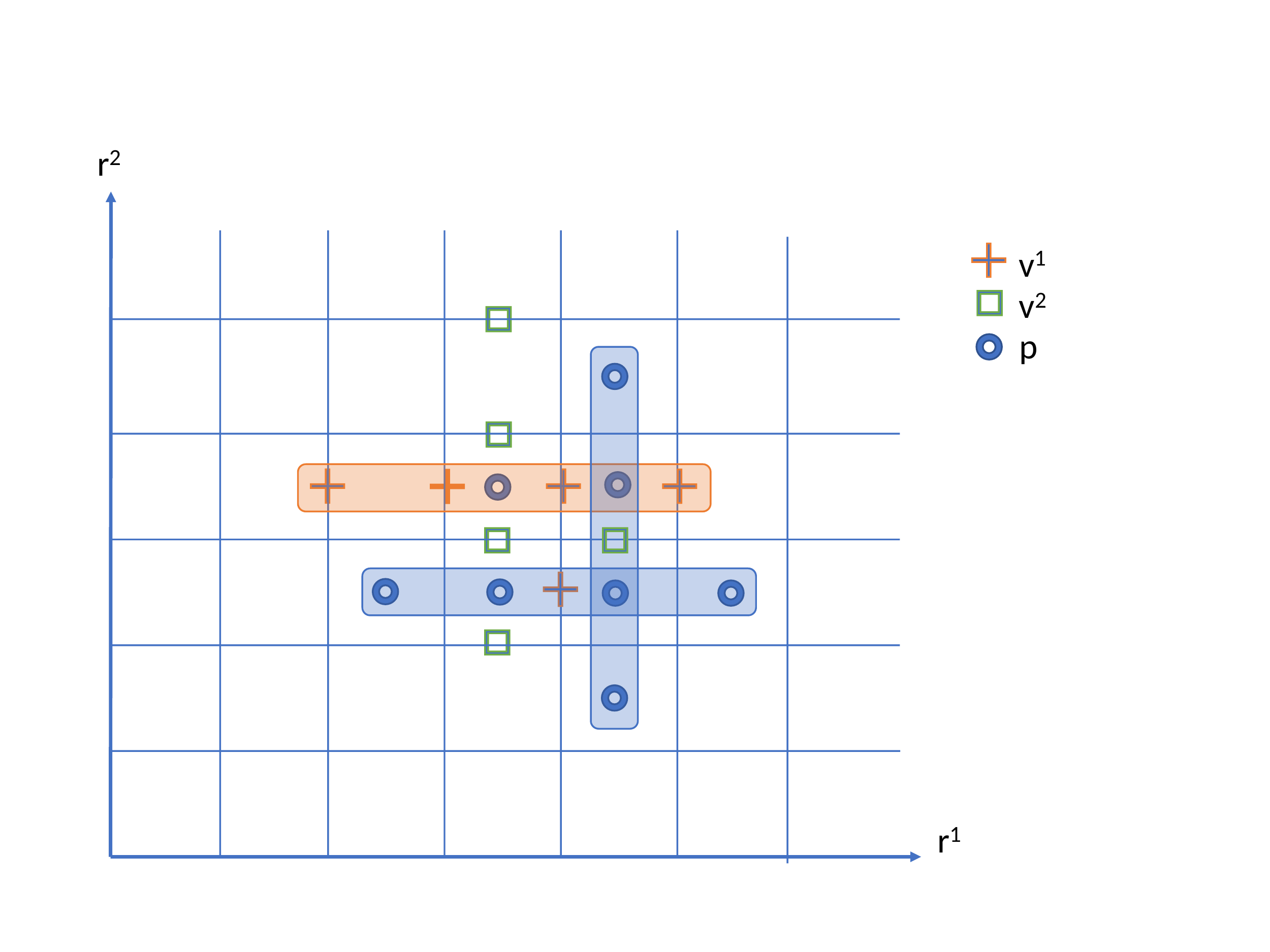}
        \caption{The interior stencils for the fourth order accurate staggered difference and interpolation operators.}
        \label{fig_bnd}
\end{figure}

\subsection{The semi-discrete approximation}
After discretizing the spatial derivatives on the staggered grid, the
semi-discretization of the pressure
equation (\ref{eq:pressure}) becomes
\begin{equation}
\hat{J} \frac{d \hat{\pb}}{d t} + \widehat{D}_1J_1{\vb}^1 + \widehat{D}_2 J_2
{\vb}^2 = 0.
\label{eq:pressure_discrete}
\end{equation}
In (\ref{eq:pressure_discrete}), $\hat{J}$, $J_1$, and $J_2$ are diagonal
matrices that hold the Jacobian evaluated at the cell-centers and the edge grid points, respectively.

The semi-discrete approximation of the equations for the contravariant velocity components (\ref{eq:velocity})
become
\begin{align}
        \frac{d}{dt}
        \begin{bmatrix} 
                \vb^1 \\
                \vb^2
        \end{bmatrix}
+ G \begin{bmatrix}
                D_1 \pb \\
                D_2 \pb
            \end{bmatrix}
            = 0.
\label{eq:velocity_discrete}
\end{align}
In (\ref{eq:velocity_discrete}), $G$ is the discrete approximation of the
contravariant metric tensor $g^{ij}$. The explicit form of $G$ will be presented in the next
section. Recall that the contravariant metric tensor gives the
transformation from covariant to contravariant components. The discrete
counterpart of (\ref{eq:contravariant}) is
\begin{align}
        \begin{bmatrix} 
                \vb^1 \\
                \vb^2
        \end{bmatrix}
        =
        G
        \begin{bmatrix} 
                \vb_1 \\
                \vb_2
        \end{bmatrix}.
        \label{contravariant_transform}
\end{align}
Since the contravariant components $\vb^1$ and $\vb^2$ are associated
with different locations of the staggered grid, $G$ must be constructed using
interpolation, 
unless the curvilinear grid is orthogonal.

\section{The energy method}\label{s:energy}
In this section, we apply the energy method to show that the scheme
(\ref{eq:pressure_discrete})-(\ref{eq:velocity_discrete}) is stable. To simplify the
presentation, we consider periodic boundary conditions and refer to Appendix
\ref{a:weak} for a
stability proof with weakly imposed boundary conditions using the SAT penalty
method.
                          
In the following, the matrix $G^{-1}$ is obtained by inverting
$G$ in (\ref{eq:velocity_discrete}). The \revisedgreen{matrix} $G^{-1}$ approximates the covariant
metric tensor $g_{ij}$. In practice, we never have to explicitly compute the
inverse. 
Consider the following discrete approximation of the energy
(\ref{continuous_energy}), $\mathcal{E} = \mathcal{E}_a + \mathcal{E}_k$, 
\begin{align}
        \mathcal{E}_a(t) = 
        \frac{1}{2} \pb^T \hat{H}\hat{J} \pb
        \quad
        \mbox{and}
        \quad
        \mathcal{E}_k(t) = 
        \frac{1}{2}
                                 \begin{bmatrix}
                                        \vb^1 \\
                                        \vb^2
                                  \end{bmatrix}^T
                                  HJ
                                  G^{-1}
                                 \begin{bmatrix}
                                        \vb^1 \\
                                        \vb^2
                                  \end{bmatrix}.
        \label{energy}
\end{align}
In (\ref{energy}), we have defined 
\begin{align}
                H = 
                \begin{bmatrix}
                        H_1 & 0\\
                        0 & H_2 
                \end{bmatrix}
                \quad
                \mbox{and}
                \quad
                J = 
                \begin{bmatrix}
                        J_1 & 0 \\
                        0 & J_2 
                \end{bmatrix}.
\end{align}
Recall that the first term on the right-hand side of (\ref{continuous_energy}) is the acoustic
energy and the two remaining terms represent the kinetic energy.  The discrete approximation of the
acoustic energy, $\mathcal{E}_a$, defines a norm of $\pb$ since the matrices $\hat{H}$ and $\hat{J}$
are diagonal with positive entries. In order for the kinetic energy, $\mathcal{E}_k$, to be a norm,
the matrix $HJG^{-1}$ must be positive definite. Since $H_1$, $H_2$, $J_1$, and $J_2$ are positive
diagonal matrices, $HJ$ is a positive diagonal matrix.  The construction of $G$ such that $HJG^{-1}$
is symmetric and positive definite is discussed in the next section.  Given such a $G$, we obtain an
energy stable scheme, as is shown in the following lemma.
\begin{lemma}\label{l:energyrate}
        Assume that the grid \revisedgreen{metric and solution} is periodic in each direction. Then the energy (\ref{energy}) of the semi-discrete
        approximation (\ref{eq:pressure_discrete})-(\ref{eq:velocity_discrete})
        is conserved.
\end{lemma}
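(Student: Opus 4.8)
The plan is to differentiate the discrete energy $\mathcal{E}=\mathcal{E}_a+\mathcal{E}_k$ in (\ref{energy}) in time and show that the acoustic and kinetic contributions cancel, mirroring the continuous computation in Section~\ref{s:problem}. Since the curvilinear mapping does not depend on time, the matrices $\hat H$, $\hat J$, $H$, $J$, $G$, and $G^{-1}$ are all constant, so the time derivative falls only on the grid functions $\pb$, $\vb^1$, $\vb^2$. For the acoustic part, $\hat H\hat J$ is diagonal and hence symmetric, so $\tfrac{d}{dt}\mathcal{E}_a=\pb^T\hat H\hat J\,\tfrac{d\pb}{dt}$; substituting the pressure update (\ref{eq:pressure_discrete}) gives $\tfrac{d}{dt}\mathcal{E}_a=-\pb^T\hat H\bigl(\widehat D_1 J_1\vb^1+\widehat D_2 J_2\vb^2\bigr)$.

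For the kinetic part I would first invoke the construction of $G$ in Section~\ref{s:contravariant}, under which $G$ is invertible and $HJG^{-1}$ is symmetric (this is exactly what makes $\mathcal{E}_k$ well defined). Then differentiating (\ref{energy}) and using that symmetry yields $\tfrac{d}{dt}\mathcal{E}_k=\begin{bmatrix}\vb^1\\\vb^2\end{bmatrix}^T HJG^{-1}\,\tfrac{d}{dt}\begin{bmatrix}\vb^1\\\vb^2\end{bmatrix}$. Substituting the velocity update (\ref{eq:velocity_discrete}) and using $G^{-1}G=I$ collapses this to $-(\vb^1)^TH_1J_1D_1\pb-(\vb^2)^TH_2J_2D_2\pb$, since $HJ=\mathrm{diag}(H_1J_1,H_2J_2)$ is block diagonal. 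Because $H_i$ and $J_i$ are diagonal they commute, so I can move each $J_i$ onto the velocity factor; writing $\wb^i:=J_i\vb^i$ this becomes $-(\wb^1)^TH_1D_1\pb-(\wb^2)^TH_2D_2\pb$.

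The key step is then the staggered SBP property (\ref{SBP_property_2d}): with periodic metric and solution the boundary terms are absent, so $(\wb^i)^TH_iD_i\pb=-(\widehat D_i\wb^i)^T\hat H\pb=-\pb^T\hat H\widehat D_i\wb^i$, using symmetry of $\hat H$. Substituting back, $\tfrac{d}{dt}\mathcal{E}_k=\pb^T\hat H\bigl(\widehat D_1 J_1\vb^1+\widehat D_2 J_2\vb^2\bigr)$, which is exactly $-\tfrac{d}{dt}\mathcal{E}_a$, and hence $\tfrac{d}{dt}\mathcal{E}=0$, i.e. the energy is conserved.

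I expect no deep obstacle here: the argument is the discrete shadow of the continuous integration-by-parts chain. The points that require care are (i) that the $G$ constructed in Section~\ref{s:contravariant} is invertible with $HJG^{-1}$ symmetric, which must be carried over as a standing hypothesis; (ii) keeping the block-diagonal and diagonal structure straight when transferring the Jacobian weights $J_i$ between the cell-centered and edge grids (this is what lets $G^{-1}$ be absorbed cleanly against $G$ and lets $\wb^i=J_i\vb^i$ match the pressure equation); and (iii) using the periodic form of (\ref{SBP_property_2d}) so that the boundary contributions genuinely vanish rather than merely being bounded. The only real work is arranging the manipulations so the cancellation is exact.
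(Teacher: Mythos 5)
Your proposal is correct and follows essentially the same route as the paper: differentiate $\mathcal{E}_a$ and $\mathcal{E}_k$, use the symmetry of $HJG^{-1}$ and the cancellation $G^{-1}G=I$ to reduce the kinetic rate to $-(\vb^1)^TH_1J_1D_1\pb-(\vb^2)^TH_2J_2D_2\pb$, and then invoke the periodic form of the SBP property (\ref{SBP_property_2d}) (equivalently $\widehat{H}\widehat{D}_i=-D_i^TH_i$) to obtain exact cancellation with the acoustic rate. The only cosmetic difference is that you phrase the final step as a quadratic-form identity applied to $\wb^i=J_i\vb^i$ whereas the paper states it as the matrix identity, which are the same thing since the $J_i$ are diagonal.
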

\begin{proof}
Differentiating the acoustic energy in (\ref{energy}) with respect to time and
inserting (\ref{eq:pressure_discrete}) into the result yields
\begin{equation}
        \pb^T\widehat{H}\widehat{J} \frac{d \pb}{d t} = -
        \pb^T\widehat{H}\widehat{D}_1J_1{\vb}^1 - \pb^T\widehat{H}\widehat{D}_2 J_2 \vb^2.
        \label{acoustic_rate}
\end{equation}
Similarly, differentiating the kinetic energy in (\ref{energy}) with
        respect to time and inserting (\ref{eq:velocity_discrete}), and assuming
        $HJG^{-1} = (HJG^{-1})^T$, 
        yields
\begin{equation}
        \begin{bmatrix} 
                d\vb^1/dt \\
                d\vb^2/dt
        \end{bmatrix}^T
        HJG^{-1} 
        \begin{bmatrix} 
                \vb^1 \\
                \vb^2
        \end{bmatrix}
        = -
\begin{bmatrix}
               H_1J_1D_1 \pb \\
               H_2J_2D_2 \pb
\end{bmatrix}^T
        \begin{bmatrix} 
                \vb^1 \\
                \vb^2
        \end{bmatrix}.
        \label{kinetic_rate}
\end{equation}
The energy rate is the sum of (\ref{acoustic_rate}) and (\ref{kinetic_rate}),
\begin{align}
        \frac{d\mathcal{E}}{dt}
        = 
        - \pb^T(\widehat{H}\widehat{D}_1 + D_1^TH_1)J_1\vb^1 
        - \pb^T(\widehat{H}\widehat{D}_2 + D_2^TH_2)J_2\vb^2.
        \label{energy_rate}
\end{align}
        On a \revisedgreen{two dimensional} periodic grid, \revisedgreen{the boundary terms in
        (\ref{SBP_property_2d}) cancel out and therefore the SBP property} simplifies to
$\widehat{H}\widehat{D}_1 = - D_1^TH_1$
and $\widehat{H}\widehat{D}_2 = -D_2^TH_2$. Thus, the right-hand side of
(\ref{energy_rate}) is identically zero and the energy of the semi-discrete
approximation is conserved.
\end{proof}

\section{Discretization of the contravariant metric tensor}\label{s:contravariant}
To define a norm for $(\vb^1, \vb^2)$, note that the second matrix in
(\ref{energy}) can be written
as
\[
        HJG^{-1} = HJ(HJG)^{-1}HJ.
\]
Because $HJ$ is diagonal and positive definite, it is sufficient to show that
$HJG$ is symmetric and positive definite.

There are many ways to construct the matrix $G$ to form the approximation $\mathcal{E}_k$ of
the kinetic energy in (\ref{energy}), but care is needed to preserve energy positivity. What complicates
matters is the fact that $\vb^1$ and $\vb^2$ are defined on
different grids. Therefore, the construction of $G$ must involve interpolation
operators at least in its off-diagonal blocks. However, as we will see in
Section \ref{s:conditional_metric_tensor}, \revisedgreen{
 positive definiteness may be lost when only 
the off-diagonal blocks are interpolated}.

\subsection{Unconditionally energy positivity preserving construction}
To obtain a matrix $G$ that gives a positive definite $HJG$, we evaluate the contravariant
metric tensor and Jacobian at the cell-centered grid points, and then interpolate back and
forth to each velocity grid. This approach results in
\begin{align}
        G =
        \begin{bmatrix}
                J_1^{-1}P_{1c}\hat{J}\hat{g}^{11} P_{c1}  &
                J_1^{-1}P_{1c}\hat{J}\hat{g}^{12}P_{c2} \\
                J_2^{-1}P_{2c}\hat{J}\hat{g}^{12} P_{c1}  &
                J_2^{-1}P_{2c}\hat{J}\hat{g}^{22}P_{c2}
        \end{bmatrix}.
        \label{stable_G}
\end{align}
The reason why $J$ appears in the construction of $G$ will become clear next.
To show the following result, we first need to make an assumption.
\begin{assumption}\label{a:null}
        The null-space of the SBP interpolation operators
        in (\ref{interpolation_operators}) is empty.
\end{assumption}
We have verified this assumption through numerical experiments. We are now ready to state the following result.
\begin{lemma}\label{l:pos}
        Let $HJ$ be given by (\ref{energy}) and let $G$ be defined by (\ref{stable_G}). The matrix
        $HJG$ is positive definite if Assumption \ref{a:null} holds, $\hat{J} > 0$ and $\hat{g}^{ij}$ is
        positive definite.
\end{lemma}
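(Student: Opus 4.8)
The plan is to show that $HJG$ is symmetric and positive definite by first writing it in a manifestly symmetric block form and then using the positivity assumptions on $\hat J$ and $\hat g^{ij}$ together with the SBP interpolation identities (\ref{SBP_interpolation_2d}). The key observation is that $HJ$ cancels the $J_1^{-1}$, $J_2^{-1}$ prefactors in (\ref{stable_G}): multiplying through,
\begin{align*}
        HJG =
        \begin{bmatrix}
                H_1 P_{1c}\hat J\hat g^{11} P_{c1}  & H_1 P_{1c}\hat J\hat g^{12}P_{c2} \\
                H_2 P_{2c}\hat J\hat g^{12} P_{c1}  & H_2 P_{2c}\hat J\hat g^{22}P_{c2}
        \end{bmatrix}.
\end{align*}
Now I would invoke (\ref{SBP_interpolation_2d}), i.e. $H_1 P_{1c} = P_{c1}^T\hat H$ and $H_2 P_{2c} = P_{c2}^T\hat H$, to rewrite each block as $P_{c1}^T(\hat H\hat J\hat g^{11})P_{c1}$, $P_{c1}^T(\hat H\hat J\hat g^{12})P_{c2}$, and so on. Collecting the four blocks, this exhibits
\begin{align*}
        HJG =
        \begin{bmatrix} P_{c1}^T & 0 \\ 0 & P_{c2}^T \end{bmatrix}
        \begin{bmatrix} \hat H\hat J\hat g^{11} & \hat H\hat J\hat g^{12} \\ \hat H\hat J\hat g^{12} & \hat H\hat J\hat g^{22} \end{bmatrix}
        \begin{bmatrix} P_{c1} & 0 \\ 0 & P_{c2} \end{bmatrix},
\end{align*}
which is of the form $R^T S R$ with $R = \operatorname{diag}(P_{c1},P_{c2})$.

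Next I would argue that the middle matrix $S$ is symmetric positive definite. Since $\hat H$ and $\hat J$ are diagonal and positive, $\hat H\hat J$ is diagonal and positive; the block matrix $S$ is then a congruence-like object $\hat H\hat J$ applied entrywise against the symmetric tensor $[\hat g^{ij}]$. More precisely, one should note $S = (\hat H\hat J \otimes I_2)\,\tilde G$ conceptually, but since $\hat H\hat J$ acts on the spatial index and the $2\times2$ tensor structure acts on the component index, $S$ is a permutation-conjugate of a block-diagonal matrix whose diagonal blocks are $(\hat H\hat J)_{kk}$ times the $2\times 2$ matrix $\begin{bmatrix}\hat g^{11}_k & \hat g^{12}_k \\ \hat g^{12}_k & \hat g^{22}_k\end{bmatrix}$ at each cell-centered grid point $k$. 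Each such $2\times2$ block is positive definite by the assumption that $\hat g^{ij}$ is positive definite, and scaling by the positive scalar $(\hat H\hat J)_{kk}>0$ preserves this. Hence $S$ is symmetric positive definite. Symmetry of $HJG$ also follows, since $R^T S R$ is symmetric whenever $S$ is.

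Finally, positive definiteness of $R^T S R$: for any vector $w=(\vb^1,\vb^2)$, we have $w^T R^T S R w = (Rw)^T S (Rw) \ge 0$, with equality only if $Rw=0$, i.e. $P_{c1}\vb^1=0$ and $P_{c2}\vb^2=0$. By Assumption \ref{a:null} the interpolation operators $\hat P$ (hence $P_{c1}$ and $P_{c2}$, which are Kronecker products of $\hat P$ with an identity) have trivial null space, forcing $\vb^1=0$ and $\vb^2=0$. Therefore $HJG$ is positive definite. The main obstacle I anticipate is the bookkeeping in the second step: making rigorous the claim that the "interleaved" matrix $S$ decomposes, after a suitable permutation of indices, into per-grid-point $2\times2$ positive definite blocks, and confirming that the diagonal scaling $\hat H\hat J$ indeed commutes through this permutation correctly. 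Everything else is a direct application of the SBP identity and the null-space assumption.
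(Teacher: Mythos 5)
Your proposal is correct and follows essentially the same route as the paper: cancel the $J^{-1}$ factors with $HJ$, apply the SBP interpolation identity (\ref{SBP_interpolation_2d}) to write $HJG = R^T S R$ with $R=\operatorname{diag}(P_{c1},P_{c2})$, show the middle matrix is positive definite by permuting it into per-grid-point $2\times 2$ blocks, and invoke Assumption \ref{a:null} to rule out a nontrivial kernel of $R$. The only cosmetic difference is that the paper additionally splits off $(\hat H\hat J)^{1/2}$ on either side of the middle matrix before permuting, whereas you absorb the positive diagonal scaling into the $2\times 2$ blocks directly; both are valid since all the matrices involved are diagonal and commute.
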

\begin{proof}
Multiplying (\ref{stable_G}) by
$HJ$ results in
\[
        HJG =
        \begin{bmatrix}
                H_1P_{1c}\hat{J}\hat{g}^{11} P_{c1}  &
                H_1P_{1c}\hat{J}\hat{g}^{12}P_{c2} \\
                H_2P_{2c}\hat{J}\hat{g}^{12} P_{c1}  &
                H_2P_{2c}\hat{J}\hat{g}^{22}P_{c2}
        \end{bmatrix}.
\]
Using the SBP interpolation property (\ref{SBP_interpolation_2d}), 
\begin{align}
        HJG =
        \begin{bmatrix}
                P_{1c}^T\hat{H}\hat{J}\hat{g}^{11}P_{c1}  &
                P_{1c}^T\hat{H}\hat{J}\hat{g}^{12}P_{c2} \\
                P_{2c}^T\hat{H}\hat{J}\hat{g}^{12}P_{c1}  &
                P_{2c}^T\hat{H}\hat{J}\hat{g}^{22}P_{c2}
        \end{bmatrix}.
        \nonumber
\end{align}
By factoring out the interpolation and norm operators as well as the Jacobians, we find
\begin{align}
        HJG =
        \begin{bmatrix}
                P_{c1}^T & 0 \\
                0 & P_{c2}^T
        \end{bmatrix}
        \begin{bmatrix}
                \hat{H}\hat{J}  &
                0 \\
                0  &
                \hat{H}\hat{J}
        \end{bmatrix}^{1/2}
        \underbrace{
        \begin{bmatrix}
                \hat{g}^{11}  &
                \hat{g}^{12} \\
                \hat{g}^{12}  &
                \hat{g}^{22}
        \end{bmatrix}
}_{V}
        \begin{bmatrix}
                \hat{H}\hat{J}  &
                0 \\
                0  &
                \hat{H}\hat{J}
        \end{bmatrix}^{1/2}
        \begin{bmatrix}
        P_{c1} & 0\\
         0 & P_{c2}
\end{bmatrix}.
\label{HJG}
\end{align}
        This factorization relies on the fact that the matrices $\hat{J}$, $\hat{H}$,
        $\hat{g}^{11}$,  $\hat{g}^{12}$, and $\hat{g}^{22}$ are diagonal with
        positive entries, and
        hence commute. Next, we show that the matrix $V$ is positive definite. 
        We can permute
        $V$ so that it consists of positive definite $2 \times 2$ blocks, one per grid point. Let $R$
        be a permutation matrix that is obtained by permuting the rows of the
        identity matrix. Then $\tilde{V} = R^T V R$ is the permutation of $V$
        that consists of $2 \times 2$ positive definite blocks. Each block holds
        the contravariant metric tensor corresponding to one grid point, which
        is non singular by assumption.
        Since permutation
        matrices satisfy $RR^T = I$, we have that $V = RR^T V RR^T =
        R\tilde{V}R^T$ is positive definite.
        If Assumption \ref{a:null} holds, then $HJG$ is symmetric and positive
        definite.
\end{proof}

\subsection{Conditionally energy positivity preserving
construction}\label{s:conditional_metric_tensor}
While the particular choice of $G$ in (\ref{stable_G}) yields a positive kinetic
energy for all non singular mappings, it may be undesirable to use this choice in
practice. The reason is that blocks along the diagonal of $G$ contain interpolation
operators that interpolate back and forth between two different grids. 
This interpolation adds extra computation and may also reduce
the accuracy in the numerical solution. Next, we
show that it is possible to avoid these extra computations and improve the
accuracy. However, care has to be taken to ensure that the 
contravariant metric tensor is discretized such that it remains positive definite. 

Consider the following discretization,
\begin{align}
        \widetilde{G} =
        \begin{bmatrix}
                g^{11}_1  &
                J_1^{-1}P_{1c}\hat{J}\hat{g}^{12}P_{c2} \\
                J_2^{-1}P_{2c}\hat{J}\hat{g}^{12}P_{c1}  &
                g^{22}_2
        \end{bmatrix}.
        \label{G_mod}
\end{align}
This choice preserves the symmetry property $HJ\widetilde{G} =
(HJ\widetilde{G})^T$, but may cause a loss of positive definiteness. To avoid
having to perform expensive eigenvalue computations to assert whether $HJ\widetilde{G}$ is
positive definite, we derive and analyze an approximate condition that can be
evaluated in a point-wise manner. 

Introduce the coefficients $\alpha > 0$ and $\beta > 0$. We use the SBP interpolation relations
(\ref{SBP_interpolation_2d}) to decompose the matrix (\ref{G_mod}) according to
\[
 HJ\widetilde{G} = 
        \begin{bmatrix}
                H_1 J_1 g^{11}_1  &
                H_1P_{1c}\hat{J}\hat{g}^{12}P_{c2} \\
                H_2P_{2c}\hat{J}\hat{g}^{12}P_{c1}  &
                H_2J_2g^{22}_2
        \end{bmatrix}
        =:
 A(\alpha,\beta) + B(\alpha,\beta),
\]
where 
\begin{align}
        A(\alpha,\beta) =
        \begin{bmatrix}
                P_{c1}^T & 0 \\
                0 & P_{c2}^T
        \end{bmatrix}
        \underbrace{
        \begin{bmatrix}
                \alpha \hat{H}\hat{J}\hat{g}^{11} &  \hat{H}\hat{J}\hat{g}^{12} \\
                 \hat{H}\hat{J}\hat{g}^{12} & \beta  \hat{H}\hat{J}\hat{g}^{22}
        \end{bmatrix}}_{C(\alpha,\beta)}
        \begin{bmatrix}
                P_{c1} & 0  \\
                0 & P_{c2}
        \end{bmatrix}, 
\label{Cmatrix}
\end{align}
and 
\begin{align}
B(\alpha,\beta) &=
        \begin{bmatrix}
          B_{11}(\alpha) & 0 \\
          0 & B_{22}(\beta)
        \end{bmatrix},
        \quad \left\{\begin{array}{rl}
          B_{11}(\alpha) &\!\!\!= H_1J_1g^{11}_1 - \alpha P_{c1}^T\hat{H}\hat{J}\hat{g}^{11}P_{c1},\\
          B_{22}(\beta) &\!\!\!= H_2J_2g^{22}_2 - \beta P_{c2}^T\hat{H}\hat{J}\hat{g}^{22}P_{c2}.
        \end{array}\right.
        \label{dG}
\end{align}
We can guarantee that $HJ\widetilde{G}$ is positive definite if the matrix $A(\alpha,\beta)$ is
positive definite and $B(\alpha,\beta)$ is positive semi-definite. The matrix $A(\alpha,\beta)$
becomes indefinite if $\alpha<=0$ or $\beta<=0$. We must therefore assume that the coefficients
$\alpha$ and $\beta$ are positive. On the other hand, the matrix $B(\alpha,\beta)$ becomes indefinite if
$\alpha$ or $\beta$ are too large. Our strategy is therefore to first derive upper bounds on
$\alpha$ and $\beta$ that guarantee that $B(\alpha,\beta)$ is positive
semi-definite. The definiteness
of $A(\alpha,\beta)$ can then be verified by solving a $2\times 2$ eigenvalue problem at each cell
center.

The matrix $B(\alpha,\beta)$ in (\ref{dG}) is clearly positive semi-definite if $B_{11}(\alpha)$ and
$B_{22}(\beta)$ are positive semi-definite, which can be guaranteed by bounding $\alpha$ and $\beta$
as is shown in the following lemma.
\begin{lemma}\label{l:bound} 
  The matrices $B_{11}(\alpha)$ and $B_{22}(\beta)$ are positive definite if the coefficients
  $\alpha$ and $\beta$ satisfy the bounds
  \begin{align}
    \alpha \leq \widetilde{\alpha} = \frac{1}{\max{\lambda^{(1)}}} \quad
    \mbox{and} \quad
    \beta \leq \widetilde{\beta} = \frac{1}{\max{\lambda^{(2)}}}.
  \end{align}
  Here, $\max \lambda^{(1)}$ and $\max \lambda^{(2)}$ are the largest eigenvalues of the respective
  generalized eigenvalue problems,
  \begin{align}
    {\mathbb Y}^{(1)}\boldsymbol{y} = \lambda^{(1)} {\mathbb X}^{(1)}\boldsymbol{y},\quad
    {\mathbb Y}^{(1)} = P_{c1}^T\hat{H}\hat{J}\hat{g}^{11}P_{c1},\quad {\mathbb X}^{(1)} =
    H_1J_1g^{11}_1,\label{eq_geneig1}\\
    {\mathbb Y}^{(2)}\boldsymbol{y} = \lambda^{(2)} {\mathbb X}^{(2)}\boldsymbol{y},\quad
    {\mathbb Y}^{(2)} = P_{c2}^T\hat{H}\hat{J}\hat{g}^{22}P_{c2},\quad {\mathbb X}^{(2)} =
    H_2J_2g^{22}_2. \label{eq_geneig2}
  \end{align}
\end{lemma}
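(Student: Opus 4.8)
The plan is to prove the statement for $B_{11}(\alpha)$; the argument for $B_{22}(\beta)$ is identical with the obvious changes of subscripts. Recall that $B_{11}(\alpha) = H_1J_1g^{11}_1 - \alpha P_{c1}^T\hat{H}\hat{J}\hat{g}^{11}P_{c1} = {\mathbb X}^{(1)} - \alpha {\mathbb Y}^{(1)}$. The key observation is that ${\mathbb X}^{(1)} = H_1J_1g^{11}_1$ is symmetric positive definite, being a product of commuting diagonal matrices with strictly positive entries ($H_1$ is a positive-definite diagonal norm matrix, $J_1 > 0$, and $g^{11}_1 > 0$ since the diagonal entries of the contravariant metric tensor are positive for a non-singular mapping). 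Likewise ${\mathbb Y}^{(1)} = P_{c1}^T\hat{H}\hat{J}\hat{g}^{11}P_{c1}$ is symmetric positive semi-definite, since $\hat{H}\hat{J}\hat{g}^{11}$ is diagonal with strictly positive entries; it is in fact positive definite under Assumption~\ref{a:null}, but we only need semi-definiteness here.

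First I would invoke the standard theory of the generalized eigenvalue problem ${\mathbb Y}^{(1)}\boldsymbol{y} = \lambda^{(1)} {\mathbb X}^{(1)}\boldsymbol{y}$ with ${\mathbb X}^{(1)}$ symmetric positive definite: this pencil has a complete set of real eigenvalues $\lambda^{(1)}_k$ and ${\mathbb X}^{(1)}$-orthogonal eigenvectors $\boldsymbol{y}_k$, and since ${\mathbb Y}^{(1)} \succeq 0$ all $\lambda^{(1)}_k \geq 0$. Equivalently, writing ${\mathbb X}^{(1)} = ({\mathbb X}^{(1)})^{1/2}({\mathbb X}^{(1)})^{1/2}$ with the positive-definite square root (which exists and is invertible), the $\lambda^{(1)}_k$ are exactly the ordinary eigenvalues of the symmetric positive semi-definite matrix $({\mathbb X}^{(1)})^{-1/2}{\mathbb Y}^{(1)}({\mathbb X}^{(1)})^{-1/2}$. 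Hence, by the Rayleigh quotient characterization, for every nonzero $\boldsymbol{z}$,
\[
  \boldsymbol{z}^T {\mathbb Y}^{(1)} \boldsymbol{z} \leq (\max \lambda^{(1)})\, \boldsymbol{z}^T {\mathbb X}^{(1)} \boldsymbol{z},
\]
with equality attained, so $\max \lambda^{(1)}$ is the smallest such constant.

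Next I would combine this with the bound $\alpha \leq \widetilde\alpha = 1/\max\lambda^{(1)}$. For any nonzero $\boldsymbol{z}$,
\[
  \boldsymbol{z}^T B_{11}(\alpha) \boldsymbol{z}
  = \boldsymbol{z}^T {\mathbb X}^{(1)} \boldsymbol{z} - \alpha\, \boldsymbol{z}^T {\mathbb Y}^{(1)} \boldsymbol{z}
  \geq \boldsymbol{z}^T {\mathbb X}^{(1)} \boldsymbol{z} - \frac{1}{\max\lambda^{(1)}}\,(\max\lambda^{(1)})\,\boldsymbol{z}^T {\mathbb X}^{(1)} \boldsymbol{z}
  = 0 .
\]
This already gives positive semi-definiteness. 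To obtain strict positive definiteness as stated in the lemma, I would note that equality throughout forces $\boldsymbol{z}$ to lie in the eigenspace of the maximal eigenvalue and $\alpha = \widetilde\alpha$ exactly; if the inequality $\alpha \leq \widetilde\alpha$ is strict, then $\boldsymbol{z}^T B_{11}(\alpha)\boldsymbol{z} \geq (1 - \alpha\max\lambda^{(1)})\,\boldsymbol{z}^T{\mathbb X}^{(1)}\boldsymbol{z} > 0$ since ${\mathbb X}^{(1)} \succ 0$. (If one wants $\alpha = \widetilde\alpha$ allowed with only semi-definiteness, the statement should read "positive semi-definite"; I would flag this minor discrepancy. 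The same remark applies to $B_{22}(\beta)$.) Finally, since $B(\alpha,\beta) = \operatorname{diag}(B_{11}(\alpha), B_{22}(\beta))$ is block-diagonal, it inherits positive (semi-)definiteness from its two blocks, completing the argument.

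I do not anticipate a serious obstacle: the whole proof is a one-line Rayleigh-quotient estimate once the positive-definiteness/semi-definiteness of ${\mathbb X}^{(1)}$ and ${\mathbb Y}^{(1)}$ is established, which in turn is immediate from the diagonal, positive-entry structure of the constituent matrices. The only point requiring a little care is the strict-versus-non-strict inequality in the conclusion, and the (harmless) dependence on whether $\boldsymbol{z}$ can be a maximal generalized eigenvector.
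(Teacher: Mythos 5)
Your proposal is correct and follows essentially the same route as the paper's proof: both reduce the question to the symmetric-definite generalized eigenvalue problem for the pencil $({\mathbb Y}^{(1)},{\mathbb X}^{(1)})$ and bound $\boldsymbol{z}^T B_{11}(\alpha)\boldsymbol{z}$ via the largest generalized eigenvalue, the paper doing this by an explicit expansion in ${\mathbb X}^{(1)}$-orthonormal eigenvectors and you by the equivalent Rayleigh-quotient characterization. Your observation that the argument only yields positive \emph{semi}-definiteness when $\alpha=\widetilde{\alpha}$ is well taken --- the paper's own proof likewise concludes semi-definiteness despite the lemma's wording.
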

\begin{proof}
Both bounds can be derived in the same fashion and we focus on the bound for $\alpha$. From the
definitions above,
\[
B_{11}(\alpha) = {\mathbb X}^{(1)} - \alpha {\mathbb Y}^{(1)}.
\]
The matrices ${\mathbb X}^{(1)}$ and ${\mathbb Y}^{(1)}$ are real, symmetric and positive definite. All
eigenvalues $\lambda^{(1)}_j$ of the generalized eigenvalue problem \eqref{eq_geneig1} are therefore real and
positive. Furthermore, the corresponding eigenvectors $\boldsymbol{y}_j$ form a complete set and can
be normalized such that
\[
\boldsymbol{y}_j^T {\mathbb X}^{(1)} \boldsymbol{y}_k = \begin{cases} 1,& j=k,\\ 0, & j\ne k.\end{cases}
\]
Any vector $\boldsymbol{z}$ can therefore be expanded according to
\[
\boldsymbol{z} = \sum_{j} \mu_j \boldsymbol{y}_j,\quad \mu_j = \boldsymbol{y}_j^T {\mathbb X}^{(1)}\boldsymbol{z}.
\]
Thus,
\begin{multline*}
  \boldsymbol{z}^T \left( {\mathbb X}^{(1)} - \alpha {\mathbb Y}^{(1)} \right)   \boldsymbol{z} =
  \left(\sum_{j} \mu_j \boldsymbol{y}_j^T\right) \left( {\mathbb X}^{(1)} - \alpha {\mathbb Y}^{(1)}
  \right)\left(\sum_k \mu_k \boldsymbol{y}_k\right) \\
  = \left(\sum_{j} \mu_j \boldsymbol{y}_j^T\right) \left( \sum_k \mu_k ( 1 - \alpha \lambda^{(1)}_k)
  {\mathbb X}^{(1)} \boldsymbol{y}_k \right) = \sum_j \mu_j^2 \left( 1 - \alpha \lambda^{(1)}_j \right).
\end{multline*}
The matrix $B_{11}(\alpha)$ is therefore positive semi-definite if
\[
 1 - \alpha \lambda^{(1)}_j \geq 0,\quad \Rightarrow \quad \alpha \leq \frac{1}{\lambda^{(1)}_j}.
\]
The inequality must be satisfied for all eigenvalues $\lambda^{(1)}_j$. We have
$1/\max(\lambda^{(1)}) \leq 1/\lambda^{(1)}_j \leq 1/\min(\lambda^{(1)})$,
which shows that $B_{11}(\alpha)$ is positive semi-definite as long as
\[
\alpha \leq \widetilde{\alpha} = \frac{1}{\max \lambda^{(1)}}.
\]
\end{proof}
In Appendix~\ref{app_eigenvals} we show how the above 2-D generalized eigenvalue problems decouples into a
number of regular eigenvalue problems along each 1-D grid line.

When using the modified metric tensor discretization, the norm $\|P\hat{P}\|_2$
should be close to \revisedred{one} to avoid instability. During the construction of the
SBP interpolation operators, there can be undetermined coefficients remaining after
satisfying the SBP property (\ref{SBP_interpolation}) and accuracy conditions.
These coefficients can be determined by choosing an objective function that 
minimizes $\|P\hat{P}\|_2$.  In this work we optimize for accuracy and manually
check that $\|P\hat{P}\|_2 \approx 1$. Please see
\cite{o2017energy,prochnow2017treatment} for further construction details. 

To understand why $\|P\hat{P}\|_2 \approx 1$ is desirable, assume the mapping is
linear. Then the metrics $J$ and
$g^{ij}$ are constant, and the generalized
eigenvalue problem (\ref{eq_geneig1}) simplifies to
\[
        P_{c1}^T \hat{H} P_{c1} \yb  = \lambda^{(1)} H_1\yb.
\]
\revisedgreen{Thereafter}, by applying the SBP property
(\ref{SBP_interpolation_2d}), we get
\[
        P_{1c}P_{c1} \yb  = \lambda^{(1)}\yb,
\]
By observing that  $P_{1c} = (P \otimes
\hat{I})$ and $P_{c1} = (\hat{P} \otimes \hat{I})$, we obtain an eigenvalue
problem along a single grid line in the $r^1$-direction, $P\hat{P} \xb  =
\lambda^{(1)}\xb$. The maximum eigenvalue $\max(\lambda^{(1)})$ satisfies $1 \leq
\max(\lambda^{(1)}) \leq \|P\hat{P}\|_2$. The lower bound comes from observing
that the constant \revisedgreen{vector} $\oneb=[1 \ 1 \ \ldots \ 1]^T$ is interpolated exactly, and is therefore an eigenvector of $P\hat{P}$.

To investigate how the size of $\|P\hat{P}\|_2$ influences the stability of the
scheme, we construct one pair of interpolation operators
with $\|P\hat{P}\|_2 =1.04$ and another pair with $\|P\hat{P}\|_2 = 8.53$. 
For the
test, we consider an almost square domain where three sides are straight and the
fourth side is a Gaussian hill,
\begin{align}
        x(r^1,r^2) = r^1, \quad
        \revisedgreen{y(r^1,r^2) = r^2 \left( 1 +  \gamma e^{-50(r^1-1/2)^2}
        \right)}. 
        \label{gaussian}
\end{align}
We increase the amplitude $\gamma$ of the Gaussian hill and monitor the minimum eigenvalue
of the symmetric matrix $HJ\widetilde{G}$ in the kinetic energy. This matrix must be
positive definite to guarantee stability. We expect that for a sufficiently large amplitude of the
Gaussian hill the minimum eigenvalue of $HJ\widetilde{G}$ becomes negative. In the test, the grids
are coarse with $17 \times 17$ grid points. Figure \ref{fig:grid} show the grids at which
$HJ\widetilde{G}$ become positive semi-definite for each set of interpolation operators. As can been
seen, if $\|P\hat{P}\|_2 \approx 1$, the grid can undergo significant deformation before
$HJ\widetilde{G}$ becomes positive semi-definite (Figure \ref{fig:grid}a). On the other hand, if
$\|P\hat{P}\|_2 \gg 1$, the grid can only undergo small deformations before $HJ\widetilde{G}$
becomes positive semi-definite (Figure \ref{fig:grid}b).

\begin{figure}[!htbp]
        \centering
 \begin{minipage}{.45\textwidth}
\centering
\subfloat[$\|P\hat{P}\|_2=1.04$ ]
{\includegraphics[scale=0.39]{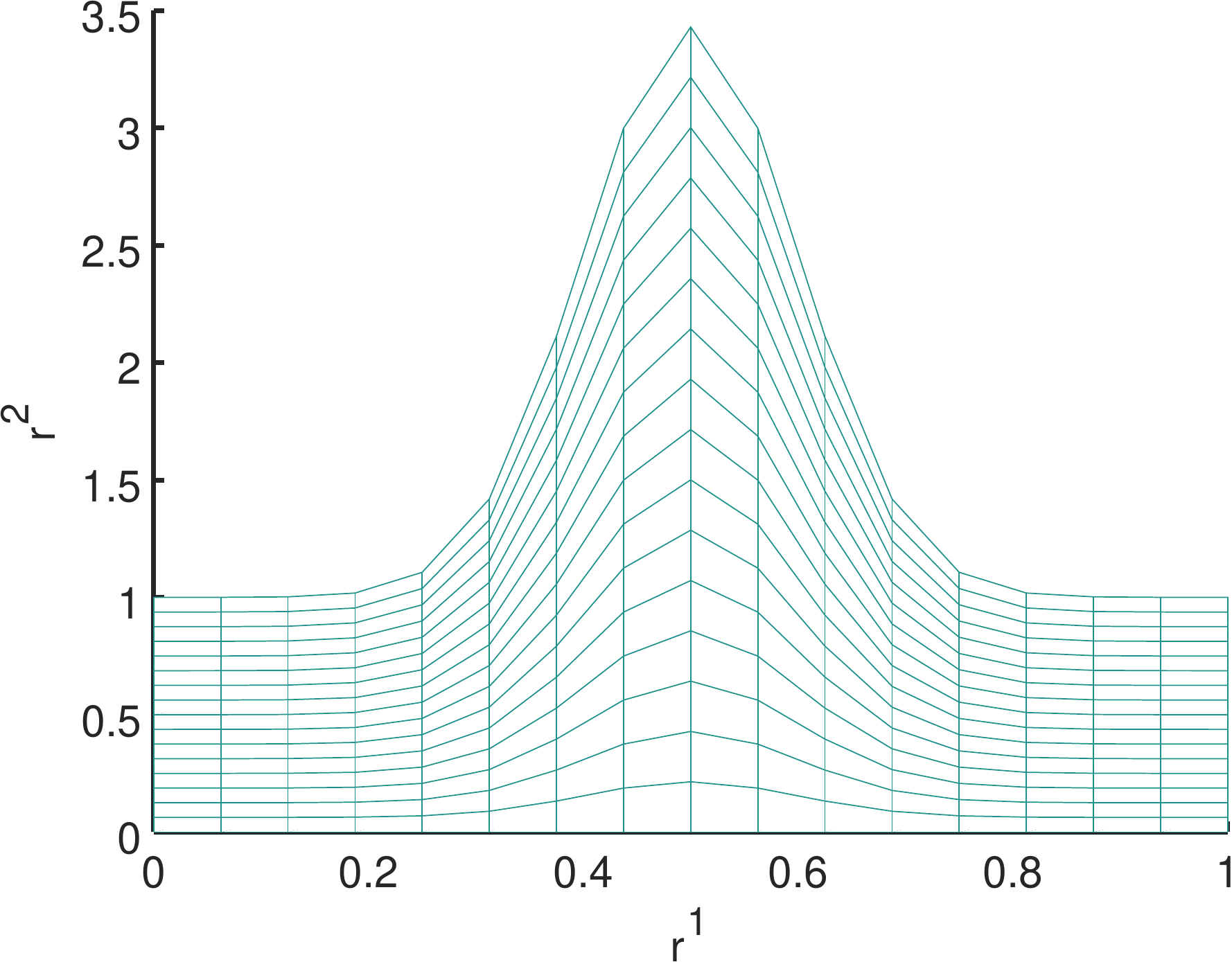}} 
\end{minipage}\qquad
\begin{minipage}{.45\textwidth}
\centering
        \subfloat[$\|P\hat{P}\|_2=8.53$ ]
        {\includegraphics[scale=0.39]{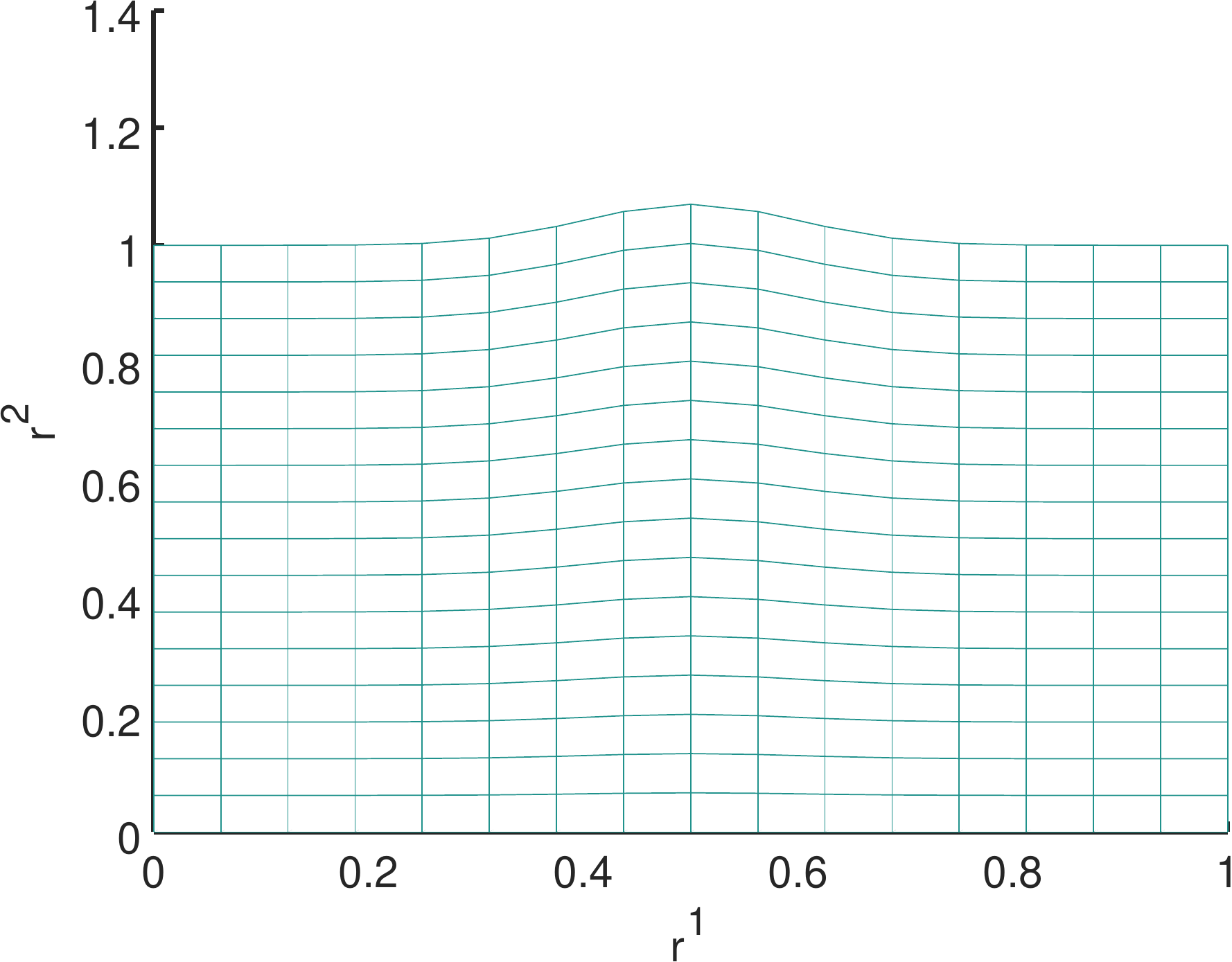}}
\end{minipage}
        \caption{ Curvilinear grids generated by the Gaussian hill mapping function
          (\ref{gaussian}). We show the largest amplitude ($\gamma$) of the Gaussian hill for which
          the kinetic energy matrix $HJ\tilde{G}$ is positive definite using two different
          interpolation operators.
        }
        \label{fig:grid}
\end{figure}

Since it is computationally expensive to compute the minimum eigenvalue $\min(\lambda)$ of
$HJ\tilde{G}$, we demonstrate how to estimate $\min(\lambda)$ using Lemma \ref{l:bound}. Note that
the constants $\alpha$ and $\beta$ can be bounded using Lemma \ref{l:bound} without much
computational work because these bounds only involve solving 1-D eigenvalue problems. Once $\alpha$
and $\beta$ are bounded, the matrix $C(\alpha,\beta)$ in \eqref{Cmatrix} can be evaluated in a
point-wise manner. This approach provides an efficient way of estimating whether $HJ\widetilde{G}$ is
positive definite. If $C(\alpha,\beta)$ is positive definite at all grid points, then
$HJ\widetilde{G}$ is positive definite. However, if $C(\alpha,\beta)$ has a negative eigenvalue at
any grid point, then the test for definiteness of $HJ\widetilde{G}$ is inconclusive.

Figure \ref{fig:stability} shows how accurate the approximate method is at
estimating if $HJ\widetilde{G}$ is positive definite by computing the minimum
eigenvalue of $C(\alpha,\beta)$ for all grid points.
In Figure \ref{fig:stability}a, the approximate method estimates that $HJ\widetilde{G}$ is
positive definite for 
$\gamma < 0.5$. (the actual amplitude at which $HJ\widetilde{G}$ becomes
positive semi-definite is $\gamma \approx 2.5$).
In Figure \ref{fig:stability}b, 
the approximate method estimates that 
$HJ\widetilde{G}$ is positive definite for amplitudes $\gamma < 0.02$ (the actual
amplitude at which $HJ\widetilde{G}$ becomes positive semi-definite is
$\gamma \approx 0.07$). 

\begin{figure}[!htbp]
        \centering
 \begin{minipage}{.4\textwidth}
\centering
\subfloat[$\|P\hat{P}\|_2=1.04$ ]
{\includegraphics[scale=0.55]{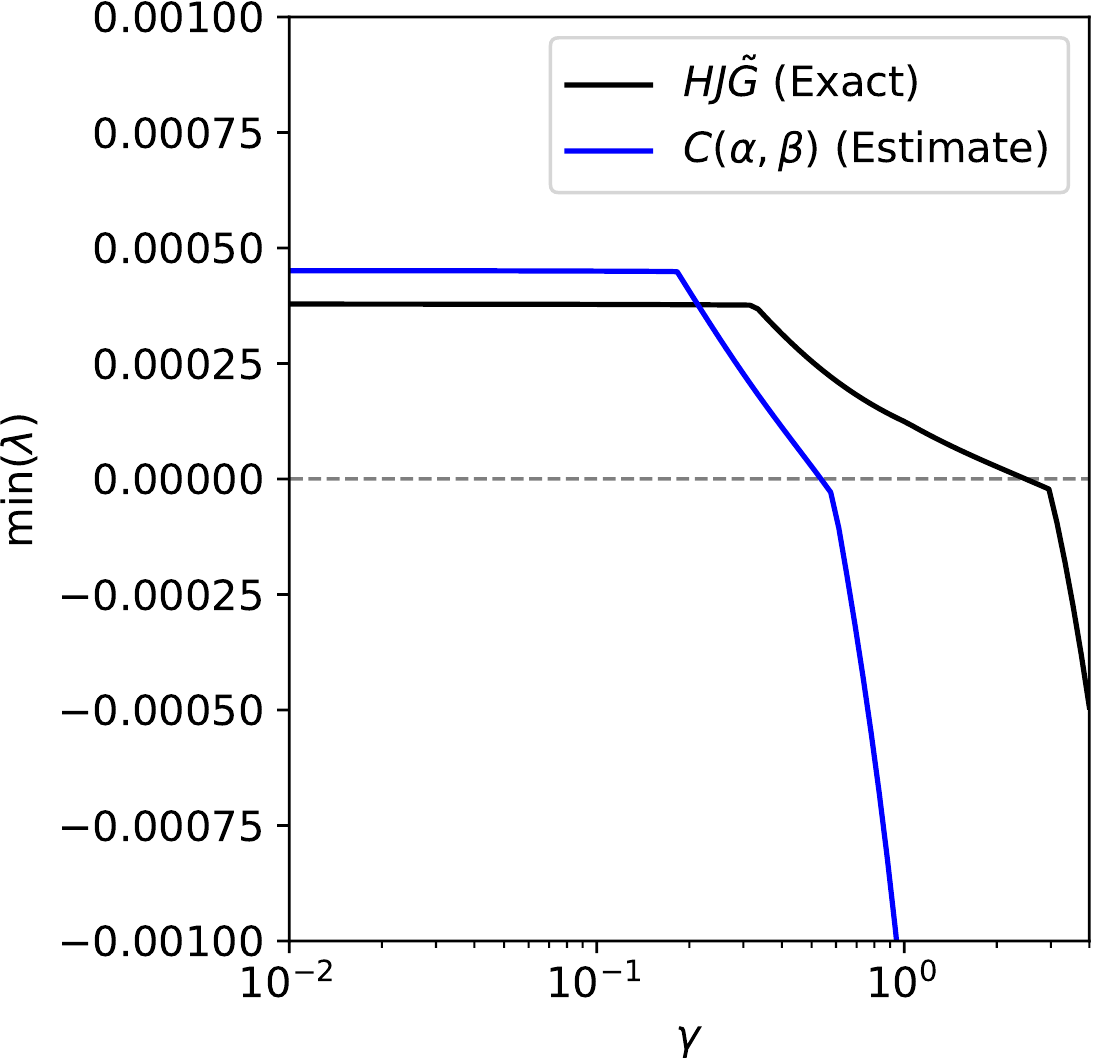}} 
\end{minipage}\qquad
\begin{minipage}{.4\textwidth}
\centering
        \subfloat[$\|P\hat{P}\|_2=8.53$ ]
        {\includegraphics[scale=0.55]{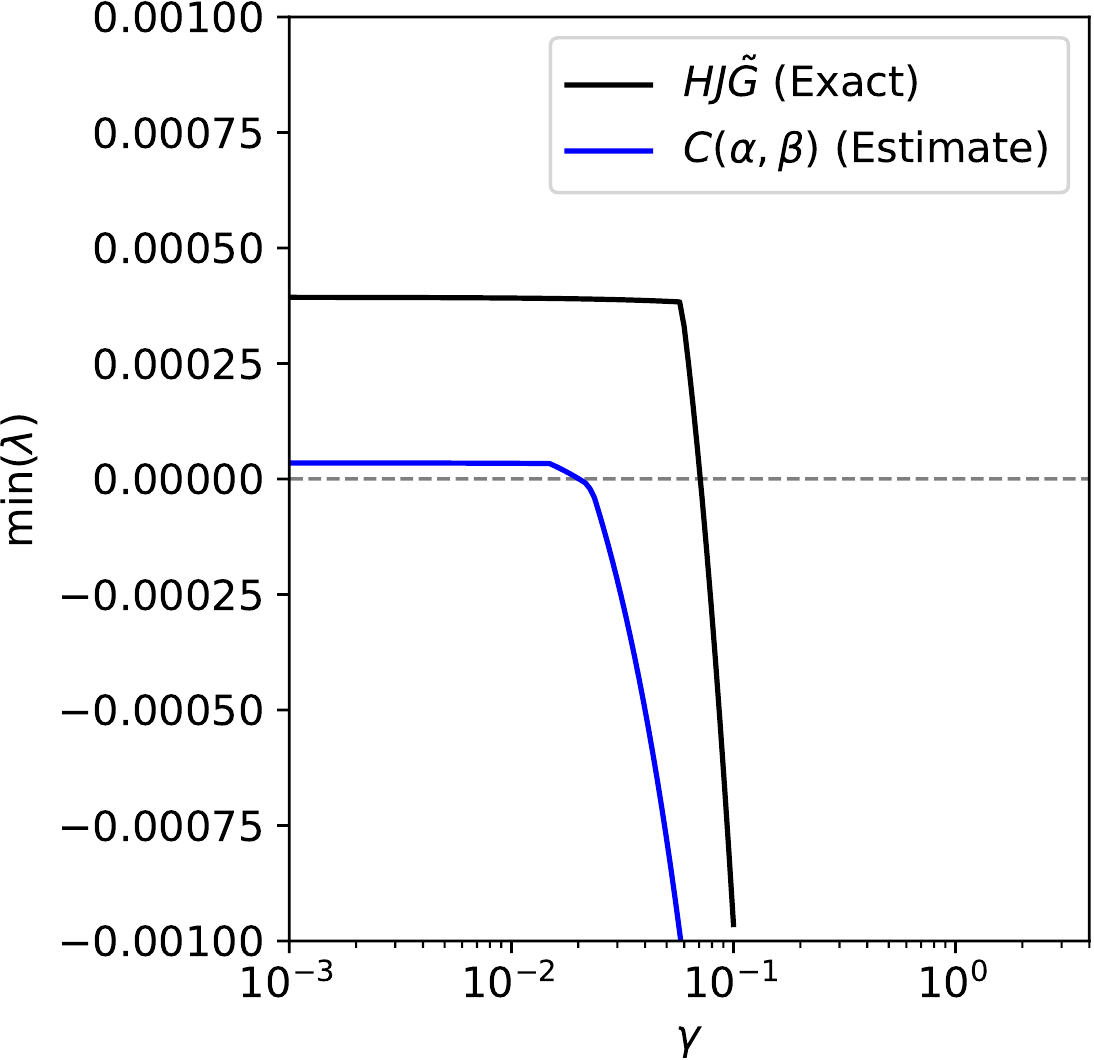}}
\end{minipage}
        \caption{ The smallest eigenvalue of the kinetic energy matrix $HJ\tilde{G}$ as function of
          the amplitude ($\gamma$) of the Gaussian hill for two different interpolation
          operators. The black lines indicate the smallest eigenvalue of $HJ\tilde{G}$, while the
          blue lines show the smallest eigenvalue of the matrix $C(\alpha,\beta)$ using the Lemma
          \ref{l:bound}.  Note that interpolation operators with a small norm $\|P\hat{P}\|_2$ make
          the scheme stable for significantly larger grid deformations compared to interpolation
          operators with a large norm.  }
        \label{fig:stability}
\end{figure}

In conclusion, large values of $\|P\hat{P}\|_2$ can
severely limit the amount of grid skewness that the scheme can tolerate without
becoming unstable. This restriction may be so severe that stable
computations on realistic grids become impossible. Therefore, if one wants to use the approximation (\ref{G_mod}) of
the contravariant metric tensor, it is important to construct interpolation
operators such that $\|P\hat{P}\|_2 \approx 1$. 

\section{Numerical experiments}\label{s:experiments}
We use the method of manufactured solutions to conduct convergence rate studies
\cite{roache2002code}. The following manufactured solution is designed to
satisfy the acoustic wave equation \eqref{vec_pressure}-\eqref{vec_velocity} in
Cartesian velocity components ($\boldsymbol{v} = \tilde{v}_x \boldsymbol{e}_x +
\tilde{v}_y \boldsymbol{e}_y$),
\begin{align}
\begin{aligned}
        \tilde{p}(x,y,t)   &= \sin(2\pi x)\sin(2\pi y)\cos(2\sqrt{2}\pi t), \\
        \tilde{v}_x(x,y,t) &= -\frac{1}{\sqrt{2}}\cos(2\pi x)\sin(2\pi y)\sin(2\sqrt{2}\pi t),\\
        \tilde{v}_y(x,y,t) &= -\frac{1}{\sqrt{2}}\sin(2\pi x)\cos(2\pi y)\sin(2\sqrt{2}\pi t).
\end{aligned}
        \label{mms}
\end{align}
To apply the manufactured solution, we impose the 
inhomogeneous pressure boundary condition, \revisedgreen{$p(x,y,t) =
\tilde{p}(x,y,t)$} for all points
$(x,y)$ on the boundary. 
We use the classical fourth order Runge-Kutta
scheme to advance the solution in time. Since a naive implementation of time
dependent boundary data reduces the convergence rate to second order for
Runge-Kutta schemes, we apply the boundary correction procedure described in
\cite{Carpenter-etal-95}.

The error is the difference between the numerical and manufactured solution, and
it is measured using either the discrete $l_2$ and $l_{\infty}$ (max) norm for
each field. The $l_2$ errors are
\begin{align}
\begin{aligned}
        \| \pb   - \tilde{\pb} \|_h^2 &= h^2\sum_{ij}\left( p_{ij} -
        \tilde{p}(\hat{x}_i,\hat{y}_j,t) \right)^2\\
        \| \vb^1 - \tilde{\vb}^1 \|_h^2 &= h^2\sum_{ij}\left( (v^1)_{ij} -
        \tilde{v}^1({x}_i,\hat{y}_j,t)\right)^2 \\
        \| \vb^2 - \tilde{\vb}^2 \|_h^2 &= h^2\sum_{ij}\left( (v^2)_{ij} -
        \tilde{v}^2(\hat{x}_i,{y}_j,t)\right)^2, 
\end{aligned}
\label{mms2}
\end{align}
where we used grid function notation $p_{ij}\approx p(\hat{x}_i,\hat{y}_j,t)$
etc. The sum of errors for all fields is denoted by 
\begin{align}
        \|\mbox{Err}\|_l = 
        \|\pb   - \tilde{\pb}\|_l + 
        \|\vb^1 - \tilde{\vb}^1\|_l + 
        \|\vb^2 - \tilde{\vb}^2\|_l,
\end{align}
where $l=h, \infty$ (depending on the choice of norm). Let $q_{h}$ and $q_{\infty}$ denote the convergence rates in the
$l_2$ and $l_{\infty}$ norms, respectively.
In (\ref{mms}), the contravariant components of the manufactured solution are
obtained by the formulas
\begin{align}
        \tilde{v}^1 = \tilde{v}_x(\ab^1\cdot \eb_x) + \tilde{v}_y(\ab^1 \cdot \eb_y), \
        \tilde{v}^2 = \tilde{v}_x(\ab^2\cdot \eb_x) + \tilde{v}_y(\ab^2 \cdot \eb_y),
        \label{cartesian_to_contravariant}
\end{align}
where $\eb_x = (1,0)$ and $\eb_y = (0,1)$ are the Cartesian basis vectors. In
all numerical experiments, we use staggered SBP interpolation and
differentiation operators having fourth order interior accuracy and second order
boundary accuracy. To construct the metrics in the scheme, we use the same
staggered SBP interpolation and differentiation operators for discretizing the
covariant basis vectors $\ab_1$ and $\ab_2$, Jacobian $J$. The discretized
contravariant basis vectors $\ab^1$ and $\ab^2$ are obtained from
(\ref{contravariant_basis}). 

\subsection{Comparison of metric tensors}\label{sec_exp1} As discussed in
section \ref{s:contravariant}, the discretization of the contravariant metric
tensor $G$ in (\ref{stable_G}) results in a provably stable scheme. However,
\revisedgreen{as we proceed to show, the modified discretization $\widetilde{G}$
in (\ref{G_mod}) gives better
accuracy}.

For this test, we use linear transfinite interpolation \cite{ThoWarMas85} to define a mapping by
specifying the following parameterization of the four boundary segments
\begin{align}
\begin{aligned}
        x(0, r^2) &= x_0 - a \sin(kr^2)  & 
        y(0, r^2) &= y_0 + r^2 & (\mbox{left}) \\
        x(1, r^2) &= x_0 + 1 + a \sin(kr^2) & 
        y(1, r^2) &= y_0 + r^2 &
        (\mbox{right})\\
        x(r^1, 0) &= x_0 + r^1  & 
        y(r^1, 0) &= y_0 - a \sin(kr^1) & (\mbox{bottom}) \\
        x(r^1, 1) &= x_0 + r^1  & 
        y(r^1, 1) &= y_0 + 1 + a \sin(kr^1), & (\mbox{top}) 
        \label{test1}
\end{aligned}
\end{align}
where $a=0.05$, $k =2\pi$, $x_0 = y_0 = 0.2$. 

\begin{table}[!htbp]
\centering
\begin{tabularx}{\textwidth}{lllllllll}
\toprule
  N & 
  $\|\text{Error}^{(G)}\|_h$ & $\|\text{Error}^{(G)}\|_{\infty}$ & $q^{(G)}_h$ &
  $q^{(G)}_{\infty}$ &
  $\|\text{Error}^{(\widetilde{G})}\|_h$ & $\|\text{Error}^{(\widetilde{G})}\|_{\infty}$
  & $q_h^{(\widetilde{G})}$ & $q^{(\widetilde{G})}_{\infty}$ 
  \\
\midrule
 16 & 2.49e-2 & 1.05e-1 &  -  &  -     & 2.44e-2 & 1.06e-1 &  -  &  -   \\       
 32 & 2.88e-3 & 1.49e-2 & 3.11 & 2.81  & 2.00e-3 & 1.56e-2 & 3.61 & 2.76  \\     
 64 & 4.03e-4 & 2.89e-3 & 2.84 & 2.37  & 2.31e-4 & 2.39e-3 & 3.11 & 2.70  \\     
 128 & 6.32e-5 & 6.89e-4 & 2.67 & 2.07 & 3.59e-5 & 4.16e-4 & 2.69 & 2.52  \\    
 256 & 1.06e-5 & 1.77e-4 & 2.58 & 1.96 & 6.24e-6 & 9.84e-5 & 2.53 & 2.08  \\    
\bottomrule
\end{tabularx}
\caption{Errors and convergence rates for two discretizations of the contravariant
        metric tensor. Results for the \revisedgreen{metric tensor
        discretization}
        defined in
        (\ref{stable_G}) are labeled $(G)$ and the \revisedgreen{modified discretization} defined in
        (\ref{G_mod}) are labeled $(\widetilde{G})$. Here, $q_h$ and
        $q_{\infty}$ denote the convergence rates in the $l_2$ and $l_{\infty}$
        norms, respectively.}
\label{t:test1_G}
\end{table}
Table \ref{t:test1_G} lists the errors and convergence rates for the energy positivity preserving
\revisedgreen{discretization} of $G$ in (\ref{stable_G}) and the modified $\widetilde{G}$ in (\ref{G_mod}). In most
cases, the error is smaller for the scheme using $\widetilde{G}$ compared to $G$. Note that
the convergence rates for the velocity components on the finer grids seem to drop to order $2.5$ in
$l_2$.

\subsection{Characteristic boundary error}
In this section, we continue analyzing the error from the experiment in Section~\ref{sec_exp1} with
the modified metric tensor \revisedgreen{discretization} (\ref{G_mod}). In particular, we observed that the convergence rate
dropped to order $2.5$ in the $l_2$ norm on the finer grids.  This convergence rate is consistent
with the truncation error analysis using the energy method, see \cite{wang2017}, and generalizes for
higher order operators as $p + 1/2$ where $p$ is the order of the truncation error of the SBP
operator near the boundary.  According to the general theory for SBP finite difference
approximations of first order hyperbolic problems~\cite{Gustafsson-75}, the convergence rate is in
many cases one order higher than the order of the truncation error at the
boundary, \revisedgreen{measured in the $l_2$ norm}. However, as is pointed out in \cite{Petersson2018}, the general theory assumes a
non-characteristic boundary. Since the acoustic wave equation contains zero-speed characteristic
variables, the general theory does not apply.

In order to examine the error in more detail, we symmetrize and diagonalize the
acoustic wave equation with respect to the direction normal to the boundary, see Appendix
\ref{a:symmetrization}. For the bottom boundary, we obtain the characteristic
variables
\begin{align}
\begin{aligned}
        w^+ &= 
        \frac{1}{\sqrt{2 J g_{11}}}
        \left(
        J \sqrt{g_{11}} p + \eta v^2
        \right)
        , \quad
        w^- = 
        \frac{1}{\sqrt{2 J g_{11}}}
        \left(
        -J \sqrt{g_{11}} p + \eta v^2
        \right)
        , 
        \\
        w^0 &= \frac{1}{\sqrt{2 J g_{11}}} \left(g_{11}v^1 + g_{12}v^2\right)
                  = \frac{v_1}{\sqrt{2 J g_{11}}}, 
        \label{characteristics}
\end{aligned}
\end{align}
where $\eta  = \sqrt{g_{11}g_{22} - g_{12}^2}$. These variables correspond to
the eigenvalues 
\[
\Lambda = \frac{\sqrt{g_{11}}}{\eta}\,
\begin{bmatrix}1 & & \\ & -1 & \\ & & 0 \end{bmatrix}.
\]
The characteristic variable $w^+$ corresponds to the positive eigenvalue and is "incoming", meaning
that it propagates into the domain, whereas $w^-$ corresponds to the negative eigenvalue and is
"outgoing", implying that it propagates out of the domain. However, the eigenvalue corresponding to
the variable $w^0$ is zero, making it boundary characteristic, meaning that it does not propagate in
the direction normal to the boundary. It is the presence of the boundary characteristic variable
that violates the assumptions of the general theory \cite{Gustafsson-75}.

To examine the error in the characteristic variables, we focus on a cross-section that starts in the
middle of the bottom boundary ($r^1 = 0.5$) and extends $10\%$ into the domain, i.e., $0 \leq r^2
\leq 0.1$. We use the SBP interpolation operators to compute the incoming and outgoing
characteristic variables at the cell-centers. Note that in
(\ref{characteristics}), only the boundary characteristic variable
$w^0$ depends on $v^1$. For simplicity, we therefore define
\begin{align}
        \|Err^{(c)}\| = \|\vb^1 - \tilde{\vb}^1\|, \
        \|Err^{(nc)}\| = \|\vb^2 - \tilde{\vb}^2\| + \|\pb - \tilde{\pb}\|
\end{align}
as the \revisedgreen{norm of the} error in the characteristic and non-characteristic boundary variables,
respectively. 

Figure \ref{fig:characteristic-error} shows the absolute value of the solution error at these grid
points for the two finest grids in Table~\ref{t:test1_G}. Note that the solution error at the first
four grid points is much larger than the error at the interior grid points. These four grid points
correspond to the region in which the SBP operators have a second order accurate truncation
error. Table \ref{t:char} lists the errors in the $l_2$ norm and $l_{\infty}$ norm measured at these
grid points for each grid. The convergence rates of the non-characteristic variables show close to
third order convergence in $l_2$, which is consistent with the standard theory
\cite{Gustafsson-75}. The convergence rate of the boundary characteristic variable is reduced
compared to the non-characteristic boundary variables. This drop in the convergence rate can also be
seen in Figure \ref{fig:characteristic-error}.

\begin{figure}[!htbp]
  \centering
  \subfloat[Characteristic boundary variable]
  {\includegraphics[scale=0.55]{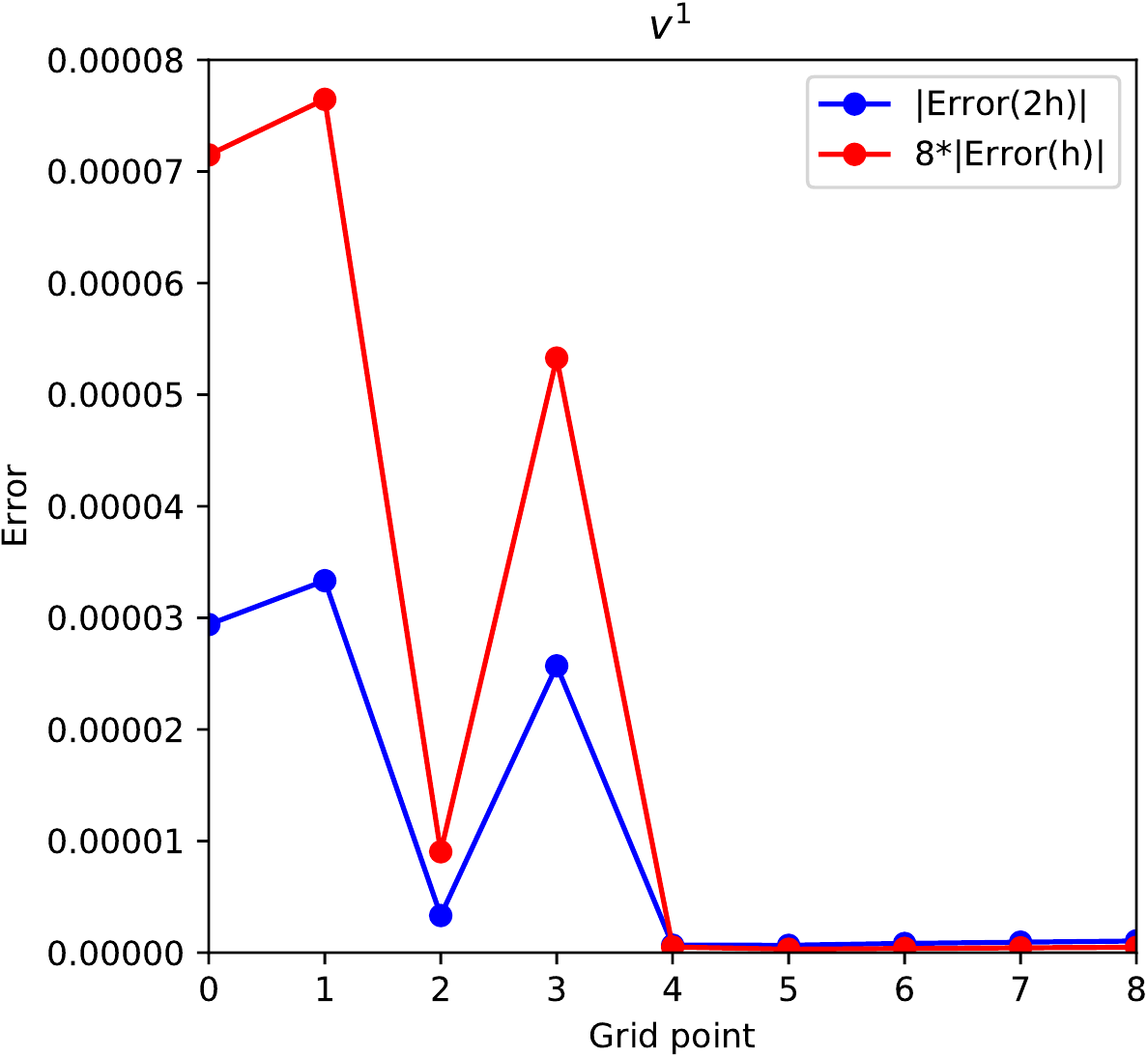}}
  \subfloat[Non-characteristic boundary variable]
  {\includegraphics[scale=0.55]{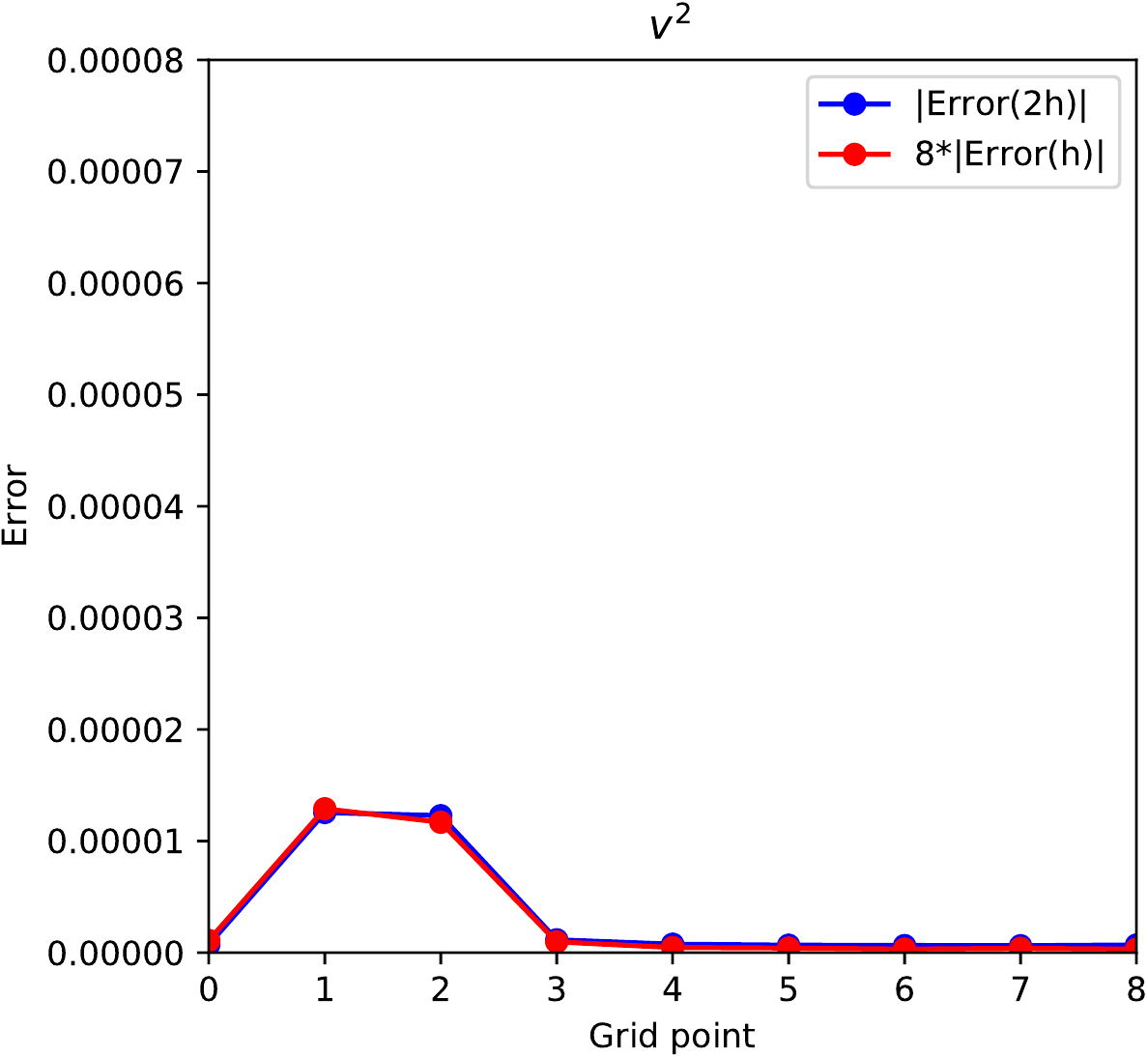}}
  \caption{The absolute error obtained in the characteristic variables for a few grid points normal to
  the bottom boundary on the two finest grids. Third order convergence is
  obtained where the error curves coincide.}
  \label{fig:characteristic-error}
\end{figure}  

\begin{table}[!htbp]
\centering
 \begin{tabularx}{\textwidth}{lllllllll}
  \toprule
  h & $\|\text{Err}^{(c)}\|_h$ & $\|\text{Err}^{(c)}\|_{\infty}$ & $q_h^{(c)}$ &
  $q_{\infty}^{(c)}$ &
  $\|\text{Err}^{(nc)}\|_h$ & $\|\text{Err}^{(nc)}\|_{\infty}$ & $q_h^{(nc)}$ &
  $q_{\infty}^{(nc)}$ \\
  \midrule
  6.25e-2 & 3.81e-3 & 9.82e-3 &  -  &  -  & 9.73e-3 & 2.43e-2 &  -  &  -   \\
  3.12e-2 & 6.12e-4 & 1.27e-3 & 2.64 & 2.95 & 1.10e-3 & 4.27e-3 & 3.14 & 2.51  \\
  1.56e-2 & 9.49e-5 & 3.95e-4 & 2.69 & 1.69 & 1.15e-4 & 7.00e-4 & 3.26 & 2.61  \\
  7.81e-3 & 1.29e-5 & 1.14e-4 & 2.87 & 1.79 & 1.41e-5 & 1.17e-4 & 3.02 & 2.58  \\
  3.91e-3 & 2.31e-6 & 3.05e-5 & 2.48 & 1.90 & 2.04e-6 & 2.04e-5 & 2.80 & 2.52\\ 
  \bottomrule
\end{tabularx}
\caption{Errors and convergence rates for boundary characteristic variables (c)
        and non-boundary characteristic variables (nc) are reported. Errors are
        measured for the grid points in
the SBP boundary-modified region of the bottom boundary.}
\label{t:char}
\end{table}

\subsection{Rotational invariance}
A desirable property of the numerical scheme is that its accuracy and stability should not depend on
the orientation of the coordinate system. Since we have chosen to decompose the velocity field with
respect to the covariant basis, the discretization is local with respect to the orientation of the
curved grid lines, and therefore rotationally invariant. If we instead the
decompose the velocity field with respect to the Cartesian basis, the
discretization does not preserve rotational invariance. In this section we compare the accuracy of these two
approaches.

When the velocity is decomposed with respect to the Cartesian basis, the metric tensor that
transforms from Cartesian components to contravariant components is defined by
\begin{align}
        \begin{bmatrix}
                v^1 \\
                v^2
        \end{bmatrix}
        =
        \begin{bmatrix}
                A_{11} & A_{12} \\
                A_{21} & A_{22}
        \end{bmatrix}^T
        \begin{bmatrix}
                v_x \\
                v_y
        \end{bmatrix}.
\end{align}
This transformation is explicitly given by
\begin{align}
        A = 
        \begin{bmatrix}
                A_{11} & A_{12} \\
                A_{21} & A_{22}
        \end{bmatrix}
        =
        \begin{bmatrix}
        {\eb}_x \cdot {\ab}^1 & {\eb}_x \cdot {\ab}^2 \\
        {\eb}_y \cdot {\ab}^1 & {\eb}_y \cdot {\ab}^2 \\
        \end{bmatrix}.
\end{align}
When a grid is rotated with respect to the Cartesian basis vectors such that the vector pairs $(\mathbf{\eb}_x,\ {\ab}^1)$ and
$(\mathbf{\eb}_y,\ \ab^2)$ become orthogonal, the diagonal entries in $A$ vanish and only
off-diagonal entries remain. These off-diagonal entries must be treated using interpolation. It is
therefore to be expected that the Cartesian formulation will suffer from accuracy degradation for
such grids.

Before proceeding to the numerical experiment, we give some more details.
When transformed to the Cartesian basis, the governing equations become 
\begin{align*}
        \frac{\partial p}{\partial t} + \frac{1}{J}
        \frac{\partial}{\partial r^1}     (JA_{11}v_x + JA_{21}v_y)
        +
        \frac{\partial}{\partial r^2}      (JA_{12}v_x + JA_{22}v_y)
        = 0, \\
        \frac{\partial v_x}{\partial t} 
        + 
        A_{11}\frac{\partial p}{\partial r^1} + A_{12}\frac{\partial p}{\partial r^2} =0, \\
        \frac{\partial v_y}{\partial t} 
        + 
        A_{21}\frac{\partial p}{\partial r^1} + A_{22}\frac{\partial p}{\partial
        r^2} =0. 
\end{align*}
The semi-discrete approximation (excluding SAT penalty terms) becomes
\begin{align}
\begin{aligned}
        \frac{d\pb}{dt} + \hat{J}^{-1}[\widehat{D}_1 \
        \widehat{D}_2]H^{-1}\tilde{A}^THJ
        \begin{bmatrix}
                \vb_x \\
                \vb_y
        \end{bmatrix} 
        &= 0,
        \\
        \frac{d}{dt}
        \begin{bmatrix}
                \vb_x \\
                \vb_y
        \end{bmatrix}
        + \tilde{A}
        \begin{bmatrix}
                D_1 \pb \\
                D_2 \pb
        \end{bmatrix}
        &=
        \begin{bmatrix}
                0 \\
                0
        \end{bmatrix}.
\end{aligned}
        \label{cartesian}
\end{align}
When discretizing the block matrix $\tilde{A}$ in (\ref{cartesian}), it is necessary to use interpolation operators for the
off-diagonal entries, 
\begin{align}
        \tilde{A} = 
        \begin{bmatrix}
                \tilde{A}_{11,1} & \tilde{A}_{12,1}P_{12} \\
                \tilde{A}_{21,2}P_{21} & \tilde{A}_{22,2}
        \end{bmatrix},
\end{align}
where the diagonal matrix $\tilde{A}_{ij,k}$ is obtained by discretizing entry $A_{ij}$
in $A$ on the edge-$k$ grid.
Here, the interpolation operators are $P_{12} = (P \otimes \hat{P})$ and $P_{21} =
(\hat{P}
\otimes P)$. 

When the velocity field is decomposed with respect to the Cartesian basis, the mixed terms that
arise in the acoustic wave equation depend on the angle between the contravariant basis and
Cartesian basis vectors. Since the mixed terms must be discretized by a combination of interpolation
and differentiation operators, the resulting operators become wide. These operators may be less
accurate than the staggered difference operators used for the diagonal terms.

To test how the accuracy depends on these angles, we use the manufactured solution (\ref{mms}) for
the following disc geometry with a cavity,
\begin{align}
\begin{aligned}
 x(r^1,r^2) &= \xi(r^1)\cos(\phi + r^2),
 \\
 y(r^1,r^2) &= \xi(r^1)\sin(\phi + r^2),
\end{aligned}
\nonumber
\end{align}
where the radial coordinate is stretched using linear interpolation from the
inner radius $R_0$ to the outer radius $R_1$ by the function
\[
        \xi(r^1) = (R_1 - R_0)r^1(ar^1 + (1-a)) + R_0.
\]
The stretching parameter $a$ is taken to be $a = 4\pi/n_2$ ($n_2$ is the number of cells in the
angular, $r^2$, direction). The inner and outer radii are set to $R_0 = 0.3$ and $R_1=1$. Periodic
boundary conditions are used in the angular direction and an offset $\phi = 0.2\pi$ is used to avoid
aligning the periodic boundary with any of the Cartesian axes.

On the coarsest grid we use $n_1 = 16$ grid cells in the radial direction and $n_2 = 48$ grid cells
in the angular direction. Finer grids are obtained by a factor of two grid refinement at each
level. The solution is advanced in time until $T = 0.5$ using the time step $\Delta t =
0.015625$. The results in Table \ref{t:disc} show that the scheme using the covariant representation
of the velocity field is always more accurate compared to the Cartesian basis decomposition.
\begin{table}[!htbp]
\centering
 \begin{tabularx}{\textwidth}{lllllllll}
  \toprule
  N & $\|\text{Err}^{(co)}\|_h$ & $\|\text{Err}^{(co)}\|_{\infty}$ &
  $q_2^{(co)}$ &
  $q_{\infty}^{(co)}$ &
  $\|\text{Err}^{(ca)}\|_h$ & $\|\text{Err}^{(ca)}\|_{\infty}$ & $q_h^{(ca)}$ &
  $q_{\infty}^{(ca)}$ \\
  \midrule
  16 & 1.69e-2 & 4.78e-2 &  -  &  -      & 6.65e-2 & 2.71e-1 &  -  &  -   \\    
  32 & 6.74e-4 & 2.43e-3 & 4.18 & 3.87   & 2.72e-3 & 1.28e-2 & 4.15 & 3.96  \\  
  64 & 4.68e-5 & 3.29e-4 & 3.67 & 2.75   & 2.04e-4 & 1.41e-3 & 3.56 & 3.04  \\  
  128 & 5.59e-6 & 6.67e-5 & 2.99 & 2.25  & 2.68e-5 & 3.20e-4 & 2.86 & 2.09  \\ 
  256 & 8.62e-7 & 1.55e-5 & 2.67 & 2.08  & 4.41e-6 & 7.87e-5 & 2.57 & 2.00 \\
  \bottomrule
\end{tabularx}
\caption{Errors and convergence rates obtained for the disc geometry with a
        cavity.  Results for the covariant (co) and Cartesian (ca) formulations
        are reported separately.}
\label{t:disc}
\end{table}

\subsection{Point source on the boundary}
In computational acoustics, point sources are frequently used to model explosions.  Spurious
oscillation can arise when source terms are introduced in schemes discretized by central finite
differences.  The triggering of spurious oscillations can be avoided by using Cartesian staggered
grids, or by smoothing out the source discretization by imposing smoothness conditions, see
\cite{Petersson-etal-source-16}.  However, there is no guarantee that spurious oscillations are
avoided in our curvilinear staggered scheme.

The point source we consider is placed on the boundary $r^2=1$ by enforcing the pressure boundary condition
\begin{align}
        p(r^1,1,t) = \frac{1}{\sqrt{(\partial x/\partial r^1)^2 + (\partial
        y/\partial r^1)^2}}\delta(r^1 - r_*)s(t).
\label{delta_fcn}
\end{align}
In the above, $\delta(r^1-r_*)$ is the Dirac distribution, centered at the source location $r_*$,
and $s(t)$ is a given source time function. 
For a detailed discussion on how to discretize the
Dirac distribution for hyperbolic problems, see \cite{Petersson-etal-source-16}. To place the source
on the boundary, we treat (\ref{delta_fcn}) as a boundary condition and impose it using a SAT term,
see \cite{o2017energy} for details.

In this test, only the top boundary is curved, and its shape is given by the
Gaussian hill,
\[
        x(r^1,1) = 10r^1, \quad
        y(r^1,1) = 5 + e^{-\frac{(r^1-0.5)^2}{\sigma^2}}, 
\]
where \revisedgreen{$\sigma = 0.105$}. A 1-D coordinate stretching is defined to generate the mapping. The source is
placed on the Gaussian hill at the location $\rb_s= (0.45, 1)$ in parameter space, or $\xb_s = (4.5,
5.79711)$ in physical space, see Figure \ref{fig:point-source-simulation}. The source time function
is a Ricker wavelet, i.e.,
\[
 s(t) = (1 - 2\pi^2(t - t_0)^2)e^{-\pi^2(t - t_0)^2},
\]
where the time delay is $t_0 = 1.7$, to prevent an abrupt startup. We define the minimum wavelength
that must be resolved as $\lambda_{min} = c_{min}/f_{max}$, where $c_{min} = 1$ is the minimum wave
speed (here set to \revisedred{1} due to solving the acoustic wave equation in dimensionless form) and
$f_{max} = 2.5$. This estimate corresponds to the frequency where the Ricker wavelet reaches $5\%$
of the peak amplitude in the frequency domain.

Since we do not have an analytic solution for
this test, we compare the numerical solutions against a reference solution that has
been computed on a very fine mesh. \revisedred{When estimating convergence
rates, we take into account that the reference solution itself contains an
error.} The resolution
of the reference solution is \revisedred{$4096 \times 2048$} grid points, which corresponds
to \revisedred{$N_{\lambda} = \lambda_{min}/h \approx 160$} grid points per shortest wavelength. The error is defined
as the difference between
the reference solution and a numerical solution measured in time at the interior
receiver
location $\rb_r = (0.5, 0.5)$, corresponding to $\xb_r = (5, 3)$.
Since this point does not align with a
grid point for each field, we use cubic interpolation of the neighboring values
to estimate the solution.

We
advance the solution in time using the time step $\Delta t = 0.03125$ (for the coarsest
grid) until the final time $T = 7.8125$. Figure
\ref{fig:point-source-simulation} shows that there are no visible artifacts from
placing a point source on the boundary \revisedred{when discretizing using the
modified metric tensor (\ref{G_mod}). On the other hand, the provably stable
metric tensor discretization (\ref{stable_G}) causes spurious oscillations.
} For the same simulation, we also show the pressure as function of time
at the receiver location and the estimated error (Figure
\ref{fig:time-history}). The initial oscillations correspond to the direct
arrival of the pressure wave from the source; oscillations at later times are due to
reflections against the non-planar boundary.

\begin{figure}[!htbp]
  \centering
  \subfloat[$t=2.25$ ]
  {\includegraphics[scale=0.35]{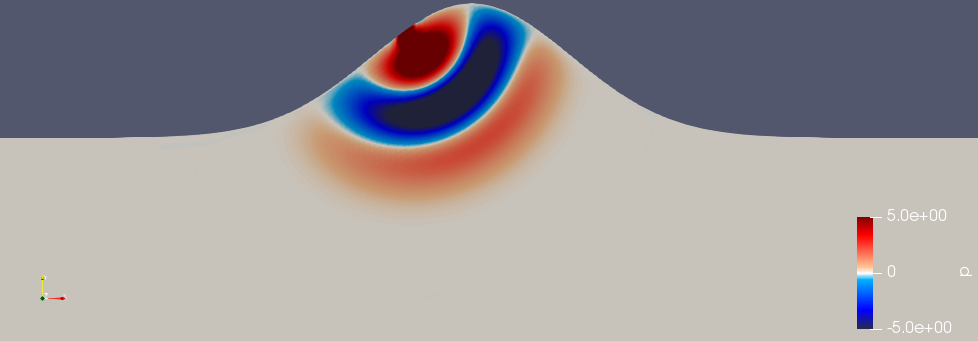}}

  \subfloat[$t=3.375$]
  {\includegraphics[scale=0.35]{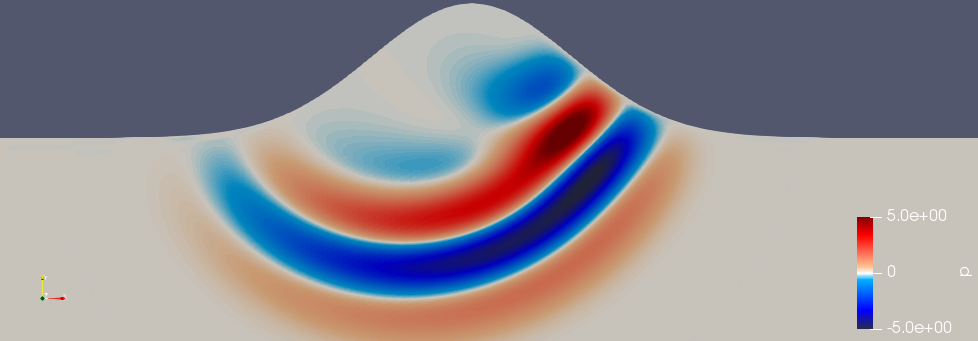}}
  \caption{Point source placed on top of a curved Gaussian
          hill at $\xb_s = (4.5, 5.79711)$. The pressure wave field is shown at two different instances
          in time.}
  \label{fig:point-source-simulation}
\end{figure}  

\begin{figure}[!htbp]
        \centering
        \subfloat[]
        {\includegraphics[scale=0.55]{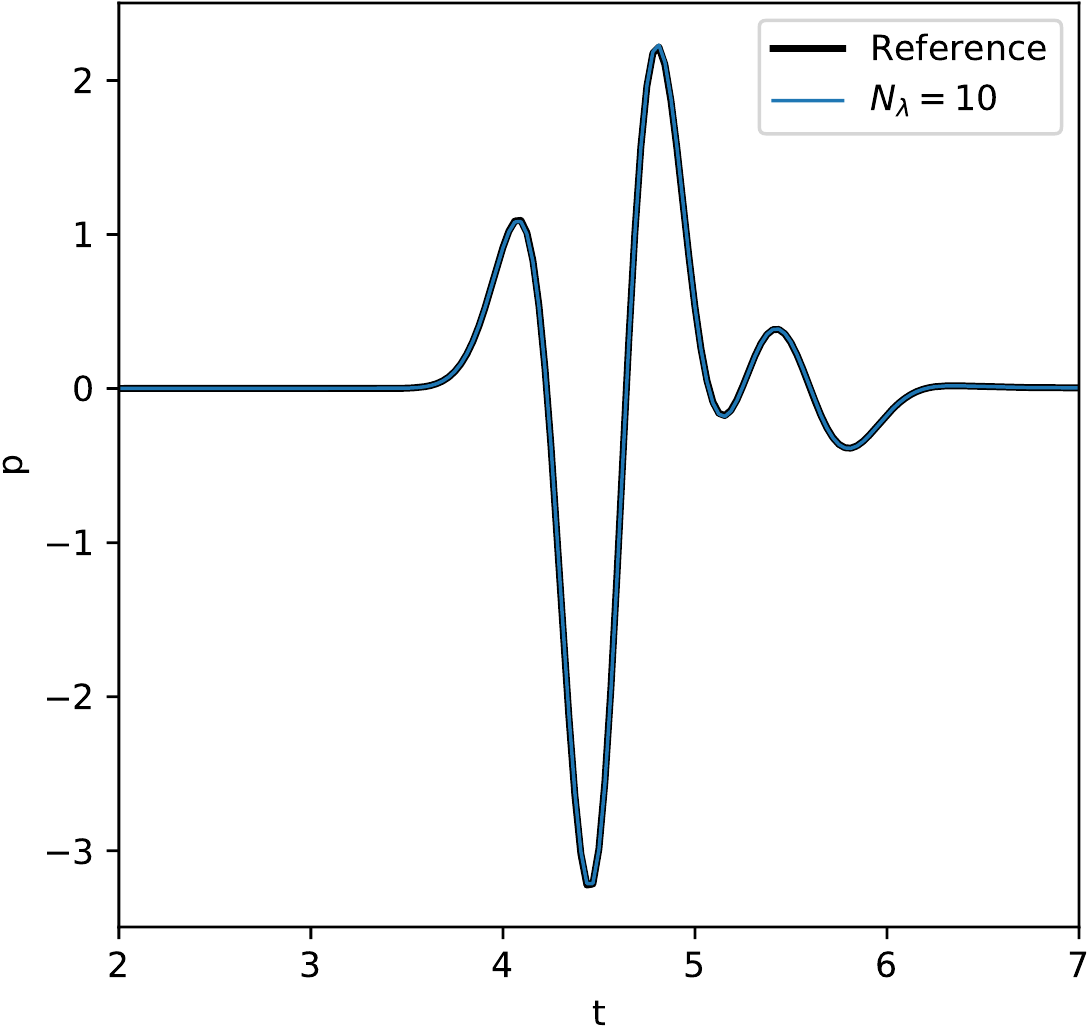}}
        \subfloat[]
        {\includegraphics[scale=0.55]{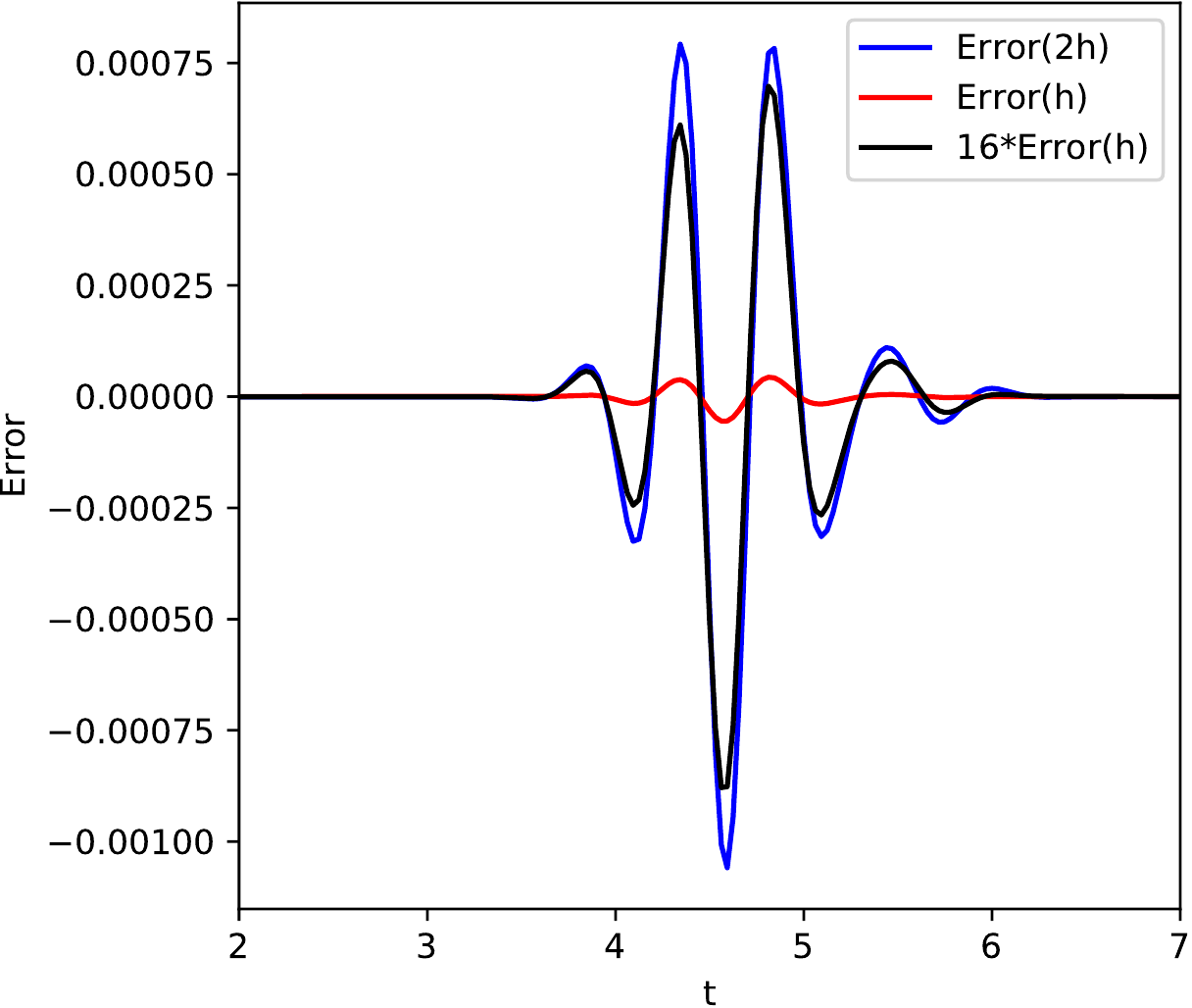}}
        \caption{Point source placed on top of a curved Gaussian boundary 
        at $\xb_s = (4.5, 5.79711)$. 
        (a) Pressure as a function of time at $\xb_r = (5,3)$ for
        a numerical solution \revisedgreen{with} $10$ grid points per minimum
        wavelength compared to a numerical solution 
        \revisedred{with $160$} grid points per
minimum wavelength. (b) Error in the pressure on the second two finest grids using the
numerical solution on the finest grid as reference solution. The black line
        shows \revisedred{16} times the error shown by the red line.
This scaling corresponds to fourth order convergence.
}
        \label{fig:time-history}
\end{figure}

\begin{table}
\revisedred{
\begin{tabularx}{\textwidth}{XXXXXXX}
\toprule
$N_{\lambda}$ & $\|\Delta p\|_{\rb}$ & $q^{(p)}_h$ &
$\|\Delta v^1\|_{\rb}$ & $q^{(v^1)}_h$ &
$\|\Delta v^2\|_{\rb}$ & $q^{(v^2)}_h$ \\
\midrule
5 & 1.20e-01 & 2.75 & 9.93e-02 & 3.14 & 1.20e-01 & 2.77 \\ 
10 & 6.32e-03 & 4.25 & 4.77e-03 & 4.38 & 6.40e-03 & 4.22 \\ 
20 & 3.16e-04 & 4.32 & 2.52e-04 & 4.24 & 3.31e-04 & 4.28 \\ 
40 & 1.64e-05 & 4.26 & 1.45e-05 & 4.12 & 1.77e-05 & 4.22 \\ 
\bottomrule
\end{tabularx}
}
\caption{Relative errors and convergence rates obtained for a point source placed on top of a curved boundary.
        Errors are computed using a numerical reference solution
        \revisedred{with $N_{\lambda}
        = 160$} grid points per minimum wavelength.}
\label{tbl:source}
\end{table}

To investigate the error in more detail, let $\|\Delta p\|_{\rb}$ denote the
error in pressure measured in the $l_2$ norm over time at the fixed point $\rb$,
and let the two velocity components be defined in a similar manner. Table
\ref{tbl:source} lists the relative error and convergence rates for both
pressure and velocity as well as an estimate of the number of grid points
per shortest wavelength to obtain the desired error. We note that fourth order
convergence rates are obtained for all components. \revisedgreen{In this case,} we see no
indication that the error due to the characteristic boundary variable has
polluted the solution at this interior location.

\subsection{Computational Efficiency}
After having established that the staggered scheme with the
modified metric tensor is capable of handling point sources on the boundary, we
now investigate its computational performance and compare it to a collocated
finite difference method. This test case is quite similar to the previous test,
and can be reproduced by using the code found in
the repository github.com/ooreilly/sbp/.

In the collocated scheme the governing equations are discretized using the classical fourth order
SBP operators derived by B. Strand in \cite{Strand-94}. One can convert the staggered scheme
into a collocated scheme by replacing the interpolation operators with identity
matrices and the derivatives with Strand's SBP operators.

The primary difference in terms of computational work between the collocated and
staggered scheme stems from the use of interpolation. In the interior of the staggered scheme, the use of
interpolation operators in the velocity equations introduces two stencils with
$4 \times 7$ points, whereas the remaining terms involve only four stencils
 with $4$ points each. In contrast, the entire collocated scheme consists of only
six stencils with 4 points each. Despite these large differences in computational work, the 
staggered scheme can outperform the collocated scheme when running on modern hardware. 

The performance of each scheme can be highly dependent on the implementation.
For prototyping purposes, it can be quite convenient to implement the schemes as
sparse matrix and dense vector products. In this way, the implementation 
looks similar to the discretizations presented in Section
\ref{s:operators}. Unfortunately,
this approach results in excessive computational overhead due explicitly storing
the large sparse matrix that holds the spatial discretization. Instead,
significant performance improvements can be achieved if the implementation is
matrix-free, or stencil-based. 

We measure the performance of the collocated and staggered scheme implemented in
a stencil-based manner. Both these compute kernels are executed on a Nvidia
Geforce RTX 2080 Ti card in single precision. In our timings, we only
investigate the performance of the interior computation (in most practical
cases, the boundary only consitutes a small fraction of the total cost). No
particular effort has been invested into optimizing the kernels performance.
Both kernels are memory bound, achieving $75 \%$ of peak memory bandwidth
(staggered) and $85 \%$ of peak memory bandwidth (collocated). Compute
utilization of the GPU streaming multiprocessors (SM)s reaches $45 \%$
(staggered) and $25 \%$ (collocated). This performance analysis was conducted
using a grid size of 1024 x 512 grid points.

\begin{figure}
        \centering
        \includegraphics[width=0.7\textwidth]{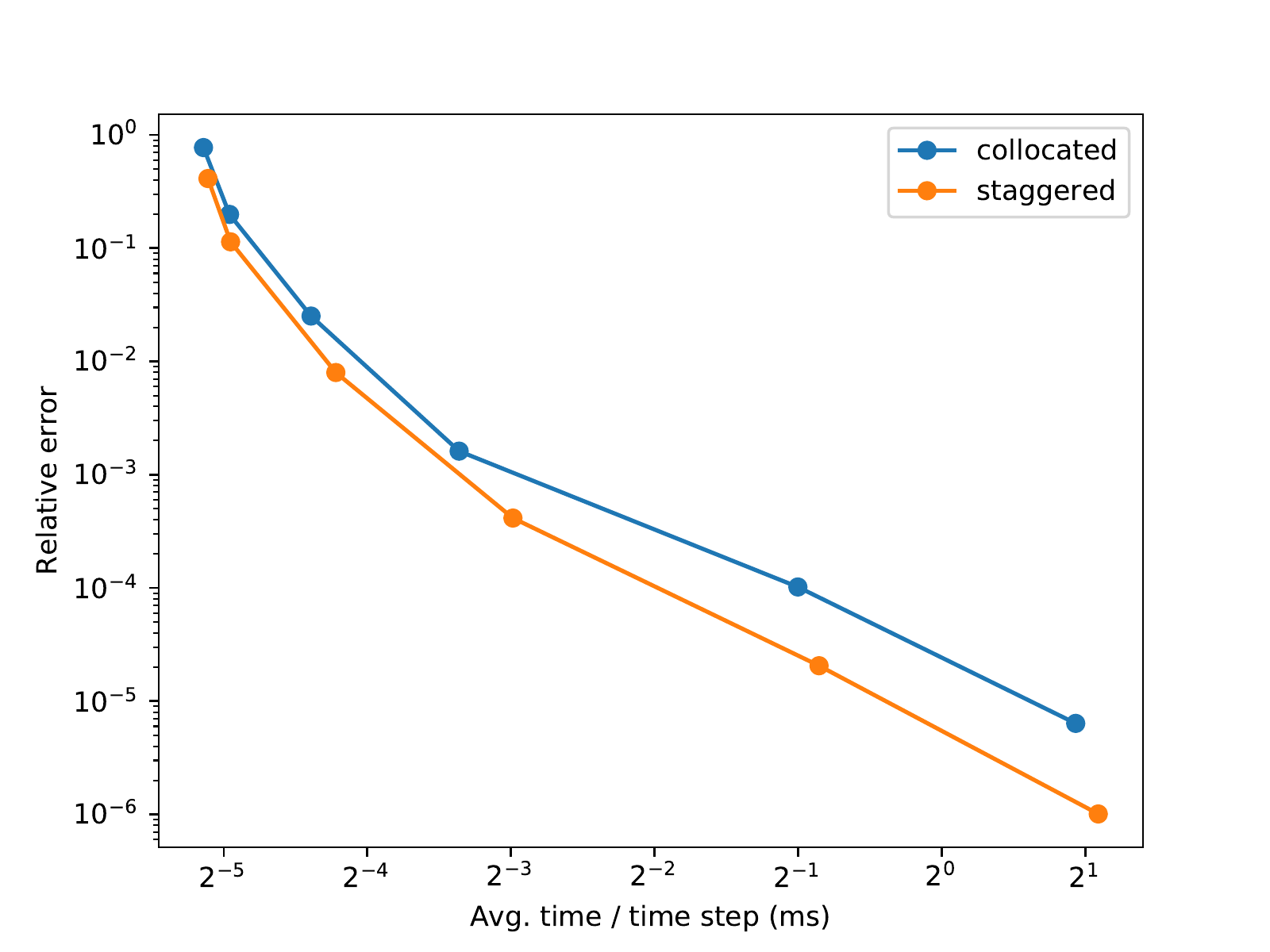}
        \caption{
                Computational efficiency of the SBP staggered scheme (this
        work) and collocated SBP scheme. The test uses a curved Gaussian
        boundary with a point source placed in the interior of the domain, at
        $\xb_s = (5, 3)$. The error is measured as a function of time at the
        receiver location $\xb_r = (4, 2.16149)$. }    
        \label{fig:efficiency}
\end{figure}

\revisedblue{
The solution is advanced in time using the same settings as before. Before the
final time is reached, acoustic waves have reflected against the exterior
boundaries and these reflections have reached the receiver location.  As can be
seen in Figure \ref{fig:efficiency}, the staggered scheme outperforms the
collocated scheme on all grids tested, and its performance improves as the error
levels decrease. On the finest grids, the difference in relative error approaches an
order of magnitude.
}

\section{Conclusions}\label{s:conclusions} 
We have shown how to construct a provably stable high order accurate finite difference approximation
of the acoustic wave equation in covariant form on general curvilinear staggered grids. To obtain an
energy conserving scheme, the discretization uses SBP interpolation and differentiation operators and
imposes the boundary conditions weakly. After transforming the acoustic wave equation to curvilinear
coordinates, the covariant metric tensor emerges in the kinetic energy. To make the scheme stable,
the discretized kinetic energy must form a discrete norm of the dependent variables. The weights in
the norm are represented by a symmetric matrix that depends on the discretization of the metric
tensor. Stability follows if this matrix is positive definite. 
We offer two alternative ways of making \revisedgreen{this matrix} positive definite. The first option uses
interpolation of both the diagonal and off-diagonal terms in the metric tensor. While this approach
leads to a stable scheme for all non singular meshes, it results in a slight degradation in accuracy
and is more computationally expensive compared to our second alternative. In that approach, a
modified discretization of the metric tensor is derived where only the off-diagonal elements are
interpolated. Here, positive definiteness of the norm matrix depends on the mesh quality. Estimates
are theoretically derived to efficiently determine if the discretization is stable for a given
mesh. Numerical experiments \revisedgreen{illustrate} the benefits of using interpolation operators with norms
close to \revisedred{one} and demonstrate the accuracy and stability of the proposed methodology.

While this work has focused on the acoustic wave equation in covariant form, future extensions to
Maxwell's equations or the elastic wave equation should also be possible.
\revisedblue{However, these extensions might be non-trivial and warrant further
investigations. For Maxwell's equations one must contend with the discretization of
the curl operator in a stable and consistent manner.} Some additional
difficulties with solving the elastic wave equation in covariant form are the emergence of
Christoffel symbols, and transformation of the stress tensor from covariant to
contravariant form.
 
\section*{Acknowledgment} 
We would like to thank Dr.~Martin Almquist for  many fruitful discussions during the initial
developments of this work, as well as his insightful comments and feedback on an early version
of this manuscript. 

This research was supported by the Southern California Earthquake Center (Contribution No. 9223),
with contributions from NSF Cooperative Agreements EAR-1600087, USGS Cooperative Agreement
G17AC00047, NSF-ACI Award 1450451, and Pacific Gas and Electric.

This work performed under the auspices of the U.S. Department of Energy by Lawrence Livermore
National Laboratory under Contract DE-AC52-07NA27344. This is contribution LLNL-JRNL-780017.

\begin{appendix}
\section{Weakly imposed pressure boundary condition}\label{a:weak}
Here, we demonstrate how to weakly impose the homogeneous pressure boundary condition method in the
covariant formulation of our scheme. To simplify the analysis, we will only consider the left
boundary $r^1 = 0$ (the other boundaries are treated in a analogous manner).

When the grid is not periodic, the SBP property (\ref{SBP_property_2d}) applied to the right-hand
side of the energy rate (\ref{energy_rate}) leads to
\begin{align}
  \frac{d\mathcal{E}}{dt} = 
  (\vb^1_L)^T \hat{J}_L\hat{M}_2 \pb_L, 
  \nonumber
\end{align}
where $\hat{J}_L$ contains the values of the Jacobian along the
edge-1 grid on left
boundary, and the contributions from the other boundaries have been
neglected. Unless a boundary condition is specified, the energy
rate is indefinite. For simplicity, we impose the homogeneous boundary condition
$p=0$ using the Simultaneous Approximation Term (SAT) penalty method.

Consider the velocity equations (\ref{eq:velocity_discrete}), augmented with a SAT penalty term on
the right-hand side,
\begin{align}
        \frac{d}{dt}
        \begin{bmatrix} 
                \vb^1 \\
                \vb^2
        \end{bmatrix}
+ G \begin{bmatrix}
                D_1 \pb \\
                D_2 \pb
            \end{bmatrix}
            = G \begin{bmatrix}
                    \Sb \\
                    0
                \end{bmatrix}.
        \nonumber
\end{align}
The penalty vector $\Sb$ restricts the pressure field to the left boundary, and can
be written as 
\begin{align}
        \Sb = -H_1^{-1}J_1^{-1}L\hat{M}(J_1)_L(\pb_L  -\fb_L(t)), 
        \label{penalty}
\end{align}
where the matrix $L$ is defined such that $L^T\vb^1 = \vb^1_L$, and
$\fb_L(t)$ is the boundary data. Since we focus on homogeneous boundary
conditions, we set $\fb_L = 0$.
When computing the energy rate including the penalty vector $\Sb$ in
(\ref{penalty}), we find
\begin{align}
        \frac{d\mathcal{E}}{dt}
        &= 
        (\vb^1_L)^T \hat{J}_L\hat{M} \pb_L 
        -
        \begin{bmatrix} 
                \vb^1 \\
                \vb^2
        \end{bmatrix}^T
        HJ
        \begin{bmatrix}
                    H_1^{-1}J_1^{-1}L\hat{M}(J_1)_L\pb_L \\
                    0
        \end{bmatrix}
        \nonumber
        \\
        &= 
        (\vb^1_L)^T \hat{J}_L\hat{M} \pb_L 
        -
        (\vb^1_L)^T \hat{J}_L\hat{M} \pb_L 
        = 0.
        \nonumber
\end{align}
By repeating the same procedure for the remaining
boundaries, the energy of the semi-discrete approximation is conserved.

\section{Symmetrization and diagonalization of the governing equations}\label{a:symmetrization}
We show that the 2D acoustic wave equation in covariant form (\ref{eq:pressure})-(\ref{eq:velocity})
 can be written as a symmetric hyperbolic
system. Recall the governing equations,
\begin{align}
  J\frac{\p p}{\p t} + \frac{\p}{\p r^1}\left(Jv^1 \right) + \frac{\p}{\p
    r^2 }\left(Jv^2\right) &= 0,\label{eq:pressure2} \\
  \frac{\p }{\p t}
        \begin{bmatrix}
        v^1 \\
        v^2
        \end{bmatrix}
        +
        \begin{bmatrix}
                g^{11} & g^{12} \\
                g^{12} & g^{22}
        \end{bmatrix}
        \begin{bmatrix}
        \frac{\p p}{\p r^1} \\
        \frac{\p p}{\p r^2}
        \end{bmatrix}
                 = \begin{bmatrix}
                         0 \\
                         0
                        \end{bmatrix}. 
                        \label{eq:velocity2}
\end{align}
Multiplying \eqref{eq:velocity2} by the covariant metric tensor gives
the system
\begin{align}
  J\frac{\p p}{\p t} + \frac{\p}{\p r^1}\left(Jv^1 \right) + \frac{\p}{\p
    r^2 }\left(Jv^2\right) &= 0,
        \nonumber
         \\
   J^{-1}
  \begin{bmatrix}
                g_{11} & g_{12} \\
                g_{12} & g_{22}
  \end{bmatrix} 
  \frac{\p }{\p t}
  \begin{bmatrix}
    Jv^1 \\
    Jv^2
  \end{bmatrix}
  +
  \begin{bmatrix}
    \frac{\p p}{\p r^1} \\
    \frac{\p p}{\p r^2}
  \end{bmatrix}
  = \begin{bmatrix}
    0 \\
    0
  \end{bmatrix}. 
        \nonumber
\end{align}
This system can be written in matrix form as
\begin{equation}
W \frac{\p \qb}{\p t} + A \frac{\p \qb}{\p r^1} + B \frac{\p \qb}{\p r^2} = 0,\quad \qb
= \begin{bmatrix} p\\ J v^1 \\ Jv^2 \end{bmatrix},
        \label{symmetrized_system}
\end{equation}
where
\begin{equation}
  \quad W =
  \begin{bmatrix}
    J & 0 & 0 \\
          0 & J^{-1}g_{11} & J^{-1}g_{12} \\
          0 & J^{-1}g_{12} & J^{-1}g_{22} 
  \end{bmatrix}
  \quad A=
  \begin{bmatrix}
    0 & 1 & 0 \\
    1 & 0 & 0 \\
    0 & 0 & 0 
  \end{bmatrix}
  \quad B=  \begin{bmatrix}
    0 & 0 & 1 \\
    0 & 0 & 0 \\
    1 & 0 & 0 
  \end{bmatrix}.
        \nonumber
\end{equation}
The matrices $A$ and $B$ are symmetric and $W=W^T$ is positive definite. 
Therefore, (\ref{symmetrized_system}) is a symmetric hyperbolic system. The energy in the system is
given by
\begin{equation}
e(t) = \frac{1}{2} \int_\Omega \qb^T W \qb \, dr^1 dr^2, 
\end{equation} and is the same as the energy (\ref{continuous_energy}). 

Next, we compute the characteristic variables for (\ref{symmetrized_system}) with respect to the
direction normal to the boundary.  First, we apply a transform to remove $W$ in front of the time
derivative term in (\ref{symmetrized_system}). Note that since $W$ is positive definite, it can be
factorized into $W = FF^T$ using the Cholesky factorization. The matrix $F$ is the unique lower
triangular matrix given by
\begin{align}
        F = \frac{1}{\sqrt{Jg_{11}}}
        \begin{bmatrix}
                J\sqrt{g_{11}} & 0 & 0 \\
                0 & g_{11} & 0 \\
                0 & g_{12} & \eta
        \end{bmatrix}, \quad
        F^{-1} = \frac{1}{\eta \sqrt{Jg_{11}}}
        \begin{bmatrix}
                \eta \sqrt{g_{11}} & 0 & 0 \\
                0 & \eta J & 0 \\
                0 & -J g_{12} & J g_{11}
        \end{bmatrix},
\end{align}
where $\eta = \sqrt{g_{11}g_{22} - g_{12}^2} > 0$ ($\eta$ is positive since the metric tensor
$g_{ij}$ is positive definite). By inserting $W = FF^T$, $FF^{-1} = I$ and $F^{-T}F^T = I$, equation
(\ref{symmetrized_system}) is transformed to
\begin{align}
        F F^T \frac{\p \qb}{\p t} + A \frac{\p  F^{-T}F^T  \qb}{\p r^1} + B \frac{\p F^{-T}F^T \qb}{\p r^2} = 0.
        \label{symmetrized_system2}
\end{align}
We introduce the definitions,
\begin{align}
        \qb' = F^T \qb, \ A' = F^{-1} A F^{-T}, B' = F^{-1} B F^{-T}, 
        \nonumber
\end{align}
where
\begin{align}
        \qb' =
        \frac{1}{\sqrt{Jg_{11}}}  
        \begin{bmatrix}
                pJ\sqrt{g_{11}} \\
                g_{11}v^1 + g_{12}v^2 \\
                \eta v^2
        \end{bmatrix}, 
        \
        A' =
        \frac{1}{\sqrt{g_{11}}}
        \begin{bmatrix}
                0 & 1  & - g_{12}/ \eta \\
                1 & 0 & 0 \\
                -g_{12}/ \eta & 0 & 0
        \end{bmatrix}, 
        \
        B' = 
        \frac{\sqrt{g_{11}}}{\eta}
        \begin{bmatrix}
                0 & 0 & 1 \\
                0 & 0 & 0 \\
                1 & 0 & 0
        \end{bmatrix}.
        \nonumber
\end{align}
Then (\ref{symmetrized_system2}) becomes,
\begin{align}
        F \frac{\p \qb ' }{\p t} +  FA'F^T \frac{\p F^{-T}\qb}{\p r^1} + FB'F^T \frac{\p
        F^{-T}\qb '}{\p r^2} = 0.
        \nonumber
\end{align}
By applying the product rule, we find
\begin{align}
        \frac{\p \qb ' }{\p t} +  A' \frac{\p \qb'}{\p r^1} + B'\frac{\p \qb
        '}{\p r^2} + \left(A'F^T\frac{\p F^{-T}}{\p r^1} + B'F^T\frac{\p F^{-T}}{\p
        r^2}\right)\qb' = 0.
        \label{symmetrized_system3}
\end{align}
To investigate boundary errors near the bottom boundary $r^2 = 0$, where the
normal is $(n_1, n_2) = (0, -1)$, we consider $A'n_1 + B'n_2' = -B'$. Since $B'$
is symmetric, it has the eigendecomposition $B' = X_B\Lambda_BX_B^T$, where $X_B$ is an orthonormal eigenvector basis, and $\Lambda_B$ is a diagonal matrix. This eigendecomposition is given by 
\begin{align}
        X_B^T = 
        \frac{1}{\sqrt{2}}
        \begin{bmatrix}
                1 & 0 & 1 \\
                -1 & 0 & 1 \\
                0 & \sqrt{2} & 0
        \end{bmatrix}, \
        \Lambda_B = 
        \frac{\sqrt{g_{11}}}{\eta}
        \begin{bmatrix}
                1 & 0 & 0 \\
                0 & -1 & 0 \\
                0 & 0 & 0 
        \end{bmatrix}.
        \nonumber
\end{align}
Finally, in the direction $(n_1, n_2) = (0, -1)$, the characteristic properties
of (\ref{symmetrized_system3}) can be studied by the model problem,
\begin{align}
        \frac{\partial \qb'}{\partial t} +B'\frac{\partial \qb'}{\partial r^2} =
        0.
        \nonumber
\end{align}
Because $X_B$ is constant, we can change variables to $\wb = X_B^T\qb'$, 
which leads to the diagonal system
\begin{align}
        \frac{\partial \wb}{\partial t} + \Lambda_B\frac{\partial \wb}{\partial
        r^2} = 0.
        \nonumber
\end{align}
The characteristic variables are
\begin{align}
        \wb = 
        \begin{bmatrix}
                w^+ \\
                w^- \\
                w^0
        \end{bmatrix}
        = X_B^T\qb' = 
        \frac{1}{\sqrt{2 J g_{11}}}
        \begin{bmatrix}
                J \sqrt{g_{11}}p + \eta v^2 \\
                -J \sqrt{g_{11}}p + \eta v^2 \\
                \sqrt{2}(g_{11}v^1 + g_{12}v^2)
        \end{bmatrix}.
        \nonumber
\end{align}

\section{Solving the generalized eigenvalue problems for $B(\alpha,\beta)$}\label{app_eigenvals}
The conditions for the matrix $HJ\widetilde{G}$ to be positive definite relies on the solution of
two generalized eigenvalue problems \eqref{eq_geneig1} and \eqref{eq_geneig2}, see
Lemma~\ref{l:bound}. In this section we use Kronecker product identities to show how the eigenvalues
of the generalized eigenvalue problems can be calculated by instead solving a number of decoupled
1-D eigenvalue problems.

Due to the ordering of the 2-D dependent variables and SBP operators, we can directly use
Kronecker product identities to analyze the generalized eigenvalue problem \eqref{eq_geneig2}. The
analysis of \eqref{eq_geneig1} follows in a corresponding way after the dependent
variables and SBP operators have been permuted.

For all matrices $A$, $B$, $C$ and $D$ where the sizes are such that the products $AC$ and $BD$ are
well-defined, it is well-known that $(A\otimes B)(C \otimes D) = (AC) \otimes (BD)$. Furthermore,
for all matrices $A$ and $B$, we have $(A\otimes B)^T = A^T \otimes B^T$.

We proceed by analyzing the generalized eigenvalue problem \eqref{eq_geneig2}. To simplify the
notation in this section we drop the superscripts on the matrices ${\mathbb X}$ and ${\mathbb Y}$.
Because the matrix ${\mathbb X}$ is positive definite, the generalized eigenvalue problem
\[
 {\mathbb Y}\boldsymbol{y} = \lambda {\mathbb X}\boldsymbol{y},
\]
has the same eigenvalues as
\begin{equation}\label{eq_gen-eig-2}
{\mathbb X}^{-1/2}{\mathbb Y}{\mathbb X}^{-1/2}\widetilde{\boldsymbol{y}} = \lambda \widetilde{\boldsymbol{y}},
\end{equation}
and the eigenvectors are related by $\widetilde{\boldsymbol{y}} = {\mathbb X}^{1/2}\boldsymbol{y}$.

From the definitions of the 2-D SBP operators in Section~\ref{sec_2dsbp},
\[
{\mathbb Y}:= P_{c2}^T \hat{H} \hat{J} \hat{g}^{22} P_{c2} = (\hat{I}\otimes \hat{P}^T) (\hat{M}\otimes \hat{M})
\hat{J}\hat{g}^{22} (\hat{I}\otimes \hat{P})
= (\hat{M}\otimes \hat{P}^T \hat{M})  \hat{J}\hat{g}^{22} (\hat{I}\otimes \hat{P}).
\]
Denote the components of the norm weight matrix by $\hat{M} = \mbox{diag}(\hat{m}_0, \hat{m}_1,
\ldots, \hat{m}_{N+1})$. Furthermore, let the diagonal matrices $J_2 g_2^{22}$ and $\hat{J}_2
\hat{g}_2^{22}$ have the blocked matrix representations,
\[
J_2 g_2^{22} = \begin{bmatrix}
  K^{(2)}_0 & & & \\
  & K^{(2)}_1 & & \\
  & & \ddots & \\
  & & & K^{(2)}_{N+1}
\end{bmatrix},\quad
\hat{J}_2 \hat{g}_2^{22} = \begin{bmatrix}
  \hat{K}^{(2)}_0 & & & \\
  & \hat{K}^{(2)}_1 & & \\
  & & \ddots & \\
  & & & \hat{K}^{(2)}_{N+1}
\end{bmatrix}.
\]
In this case, $K^{(2)}_k$ is a diagonal $(N+1)\times(N+1)$ matrix and $\hat{K}^{(2)}_k$ is a
diagonal $(N+2)\times(N+2)$ matrix. They hold the values of the corresponding metric coefficients
along a nodal or a cell-centered grid line in the $r^2$-direction, with $r^1=\hat{x}_k$
(cf.~\eqref{eq_1d-grids} for a definition of $\hat{x}_k$).

From the definition of a Kronecker product,
\[
(\hat{M}\otimes \hat{P}^T \hat{M})  \hat{J}\hat{g}^{22} =
\begin{bmatrix}
  \hat{m}_0 \hat{P}^T \hat{M} \hat{K}^{(2)}_0 & & & \\
  & \hat{m}_1 \hat{P}^T \hat{M} \hat{K}^{(2)}_1 & & \\
  & & \ddots & \\
  & & &\hat{m}_{N+1} \hat{P}^T \hat{M}  \hat{K}^{(2)}_{N+1}
\end{bmatrix}.
\]
Therefore,
\[
  {\mathbb Y} =
  \begin{bmatrix}
    \hat{m}_0 \hat{P}^T \hat{M} \hat{K}^{(2)}_0\hat{P} & & & \\
    & \hat{m}_1 \hat{P}^T \hat{M} \hat{K}^{(2)}_1\hat{P} & & \\
    & & \ddots & \\
    & & &\hat{m}_{N+1} \hat{P}^T \hat{M}  \hat{K}^{(2)}_{N+1}\hat{P}
  \end{bmatrix}.
\]
In a similar way,
\[
  {\mathbb X} := H_2 J_2 g_2^{22} = (\hat{M}\otimes M)J_2 g_2^{22},
\]
and the above blocked representation of $J_2 g_2^{22}$ gives
\[
  {\mathbb X} =
  \begin{bmatrix}
    \hat{m}_0 M K^{(2)}_0 & & & \\
    & \hat{m}_1 M K^{(2)}_1 & & \\
    & & \ddots & \\
    & & & \hat{m}_{N+1} M K^{(2)}_{N+1} 
  \end{bmatrix}.
\]
Note that $\mathbb X$ is a positive definite diagonal matrix.

Because of the diagonal blocked structure of the matrices ${\mathbb X}$ and ${\mathbb Y}$, the
eigenvalue problem \eqref{eq_gen-eig-2} reduces to $N+2$ decoupled eigenvalue problems
\begin{equation}\label{eq_1d-eigen}
  {\mathbb B}_k\widetilde{\widetilde{\boldsymbol{y}}} = \lambda
  \widetilde{\widetilde{\boldsymbol{y}}},\quad k=0,1,\ldots, N+1,
\end{equation}
where,
\[
  {\mathbb B}_k = M^{-1/2} (K^{(2)}_k)^{-1/2} \hat{P}^T \hat{M} \hat{K}^{(2)}_k \hat{P} (K^{(2)}_k)^{-1/2} M^{-1/2}.
\]
Note that $(K^{(2)}_k)^{-1/2}$ and $M^{-1/2}$ commute because they are both diagonal. To summarize,
we have shown that the eigenvalues of the 2-D generalized eigenvalue problem \eqref{eq_geneig2}
can be calculated by solving $N+2$ decoupled 1-D eigenvalue problems \eqref{eq_1d-eigen}.

In a similar fashion, calculating the eigenvalues of the generalized eigenvalue problem \eqref{eq_geneig2}
reduces to $N+2$ decoupled eigenvalue problems
\[
  {\mathbb A}_j\widetilde{\widetilde{\boldsymbol{y}}} = \lambda
  \widetilde{\widetilde{\boldsymbol{y}}},\quad j=0,1,\ldots, N+1,
\]
where,
\[
  {\mathbb A}_j = M^{-1/2} (K^{(1)}_j)^{-1/2} \hat{P}^T \hat{M} \hat{K}^{(1)}_j \hat{P} (K^{(1)}_j)^{-1/2} M^{-1/2}.
\]
In this case, the diagonal matrices $K^{(1)}_j$ and $\hat{K}^{(1)}_j$ are the blocks of the diagonal matrices
\[
J_1 g_1^{11} = \begin{bmatrix}
  K^{(1)}_0 & & & \\
  & K^{(1)}_1 & & \\
  & & \ddots & \\
  & & & K^{(1)}_{N+1}
\end{bmatrix}
,\quad
\hat{J}_1 \hat{g}_1^{11} = \begin{bmatrix}
  \hat{K}^{(1)}_0 & & & \\
  & \hat{K}^{(1)}_1 & & \\
  & & \ddots & \\
  & & & \hat{K}^{(1)}_{N+1}
\end{bmatrix}.
\]
Here, $K^{(1)}_j$ and $\hat{K}^{(1)}_j$ hold the values of the corresponding metric coefficients
along a nodal or cell-centered grid line in the $r^1$-direction with $r^2=\hat{x}_j$.
\end{appendix}

\bibliographystyle{plain}
\bibliography{references}

\end{document}